\documentclass[letterpaper,12pt, reqno]{amsart}
\usepackage{latexsym, amsmath, amssymb, amsthm, mathrsfs}
\usepackage{fullpage,url,amssymb,amsmath,graphicx,color, mathrsfs}
\usepackage{bbm}
\usepackage{url}
\usepackage{amsthm}
\usepackage{amsrefs}
\usepackage{amsfonts}
\usepackage{subcaption}
\usepackage{hyperref}
\usepackage[ruled,noend,noline,slide]{algorithm2e}
\usepackage[table,xcdraw]{xcolor}

\newtheorem{lemma}{Lemma}[section]
\newtheorem{theorem}[lemma]{Theorem}
\newtheorem*{theorem*}{Theorem}
\newtheorem{cor}[lemma]{Corollary}
\newtheorem{proposition}[lemma]{Proposition}
\newtheorem{claim}[lemma]{Claim}

\newtheorem{claim*}{Claim}
\newtheorem{remark}[lemma]{Remark}

\newcommand{\Var}{\operatorname{Var}}

\newcommand{\calA}{{\mathcal A}}

\newcommand{\var}{\operatorname{Var}}

\newcommand{\og}{\mbox{O}_{\Pr}}

\newcommand{\och}{\mbox{o}_{\Pr}}

\newcommand{\beq}{\begin{equation}}
\newcommand{\enq}{\end{equation}}
\newcommand{\beqa}{\begin{eqnarray}}
\newcommand{\enqa}{\end{eqnarray}}
\newcommand{\neno}{\newline\noindent}

\newcommand{\dds}{\,\mbox{d}}

\newcommand{\PP}{\mathbb{P}}

\newcommand{\RR}{\mathbb{R}}
\newcommand{\NN}{\mathbb{N}}
\newcommand{\real}{\mathbb{R}}

\newcommand{\EE}{\mathbb{E}}
\newcommand{\esp}{\mathbb{E}}
\DeclareMathOperator*{\argmin}{arg\,min}
\DeclareMathOperator*{\argmax}{arg\,max}

%% Special way to write algorithms 
\newenvironment{Ualgorithm}[1][htpb]
  {\def\@algocf@post@ruled{\kern\interspacealgoruled\hrule  height\algoheightrule\kern3pt\relax}%
    \def\@algocf@capt@ruled{under}
    \begin{algorithm}[#1]}
  {\end{algorithm}}

\title{Local angles and dimension estimation\\from data on manifolds}
\date{}
\begin{document}
\author{Mateo D\'iaz}
\address{Mateo D\'iaz,
Center for Applied Mathematics,
657 Frank H.T. Rhodes Hall,
Cornell University,
Ithaca, NY 14853} 
\email{md825@cornell.edu }

\author{Adolfo J. Quiroz}
\address{Adolfo J. Quiroz, Departamento de
  Matem\'aticas\\ Universidad de los Andes\\ Carrera 1 No. 18a 10\\ Edificio
  H\\ Primer Piso\\ 111711 Bogot\'a\\ Colombia}
\email{aj.quiroz1079@uniandes.edu.co}
\author{Mauricio Velasco}
\address{Mauricio Velasco, Departamento de
  Matem\'aticas\\ Universidad de los Andes\\ Carrera 1 No. 18a 10\\ Edificio
  H\\ Primer Piso\\ 111711 Bogot\'a\\ Colombia}
\email{mvelasco@uniandes.edu.co}

\keywords{dimension estimation, local $U$-statistics, angle variance, manifold learning}
\subjclass[2010]{62G05, 62H10, 62H30}

\begin{abstract} For data living in a manifold $M\subseteq \RR^m$ and a point $p\in M$ we 
consider a statistic $U_{k,n}$ which estimates the variance of the angle between pairs of 
vectors $X_i-p$ and $X_j-p$, for data points $X_i$, $X_j$, near $p$, and evaluate this statistic as a tool for estimation of the intrinsic dimension of $M$ at $p$. Consistency of the local dimension
estimator is established and the asymptotic distribution of $U_{k,n}$ is found under minimal
regularity assumptions. Performance of the proposed methodology is compared against state-of-the-art methods on simulated data. 
 
\end{abstract}

\maketitle
\section{Introduction}\label{sec:1}
Understanding complex data sets often involves dimensionality reduction. This is particularly necessary in the analysis of images, and when dealing with genetic or text data. Such data sets are usually presented as collections of vectors in $\real^m$ and it often happens that there are non-linear dependencies among the components of these data vectors. In more geometric terms these non-linear dependencies amount to saying that the vectors lie on a submanifold $M\subseteq \real^m$ whose dimension $d$ is tipically much smaller than $m$. The expression {\it manifold learning} has been coined in the literature for the process of finding properties of $M$ from the data points. 

Several authors in the artificial intelligence literature have argued about the convenience of having methods to find or approximate these low-dimensional manifolds \cite{bn,fsa,rs,sbn,srh,ten}. Procedures for achieving this kind of low dimensional representation are called {\it manifold projection methods}.
Two fairly successful such methods are 
Isomap of Tenenbaum, de Silva and Langford \cite{ten} and the Locally
Linear Embedding method of Roweis and Saul \cite{rs}. For these and other
manifold projection procedures, a key initial ingredient is a  precise estimation of the integer $d$, ideally obtained at low computational cost.

The problem of estimating $d$ has been the focus of much work in statistics starting from the pioneering work of Grassberger-Procaccia~\cite{gp}.  Most of the most recent dimension identification procedures appearing in the literature are either related to graph theoretic ideas \cite{bqy,bqy13,y98,py01,py11} or to nearest neighbor distances \cite{pbjd,lb,cgh,lombardi11}. A key contribution of the latter group is the work of Levina and Bickel \cite{lb} who propose a
``maximum likelihood'' estimator of intrinsic dimension. To describe it let $L_k(X_i)$ be the distance from the sample point $X_i$ to its $k$-th nearest neighbor in the sample (with respect to the euclidean distance in the ambient space $\real^m$). Levina and Bickel show that, asymptotically, the expected value of the statistic
\beq\label{e1}
\widehat
m_k(X_i):=\left[\frac{1}{k-2}\sum_{j=1}^{k-1}
\log\frac{L_k(X_i)}{L_j(X_i)}\right]^{-1}
\enq 
coincides with the intrinsic dimension $d$ of the data. As a result, they propose the corresponding 
sample average $\overline m_k:= n^{-1} \sum_{i=1}^n \widehat
m_k(X_i)$ as an estimator of dimension. Asymptotic properties of this statistic have
been obtained in the literature (see \cite[Theorem 2.1]{py11}) allowing for the construction of confidence intervals.  Both the asymptotic expected value and the asymptotic distribution are independent of the underlying density from which the sample points are drawn and thus lead to a truly non-parametric estimation of dimension.

In addition to distances, Ceruti et al. propose in~\cite{ceruti-D} that angles should be incorporated in the dimension estimators. This proposal, named \texttt{DANCo}, combines the idea of norm concentration of 
nearest neighbors with the idea of angle concentration for pairs of points on
the $d$-dimensional unit sphere. 

The resulting dimension identification procedure is relatively involved. The method combines two ideas. On one hand it uses the Kullback-Leibler divergence to measure the distance between the estimated probability density function (pdf) of the normalized nearest neighbor distance for the data considered and the corresponding pdf of the distance from the center of an $r${-}dimensional unit ball to its nearest neighbor under uniform sampling. On the other hand, it uses a concentration result due to S\"odergren \cite{soderg}, for angles corresponding to {\it independent} pairs of points on a sphere.

The main contribution of this article is a new and simple dimension identification procedure based solely on angle concentration. We define a $U$-statistic which averages angle squared deviations over all pairs of vectors in a nearest neighbor ball of a fixed 
point and determine its asymptotic distribution. In the basic version of our proposed method there is no need of calibration of distributions and moreover our statistic is a $U$-statistic among {\it dependent} pairs of data points and it is well known that these offer fast convergence to their mean and asymptotic distribution.

Our method has been called ANOVA in the literature\footnote{The term was coined by Breiding, 
Kalisnik, Sturmfels and Weinstein  in~\cite{BKSW} when describing an earlier preliminary version of 
this article}, given
 that the $U$-statistic used, $U_{k,n}$ to be defined below, is an estimator 
 of the variance of the angle between pairs of vectors among uniformly chosen points in the sphere 
 $S^{d-1}$. Our main results are to prove the consistency of the proposed method of 
 estimation (Proposition \ref{prop: consist}) 
 and the description of the (suitably normalized) asymptotic 
 distribution of the statistic considered (Theorem~\ref{thm:main}), a result that is very useful in the construction of asymptotic confidence intervals in dimension estimation. We describe our proposed method in Section~\ref{sec:2} and provide its theoretical 
 justification in Section~\ref{sec:3}. Sections~\ref{sec:4} and \ref{sec:5} discuss the details of our implementation of the dimension identification 
 procedure together with some empirical improvements. It also contains the result of performance evaluations on simulated examples, 
 including comparisons with current state-of-the-art methods.

\section{A $U$-statistic for dimension identification}\label{sec:2}
\subsection{Description of the statistic}
Suppose our data form an i.i.d. sample, $X_1,\dots, X_n$ from a distribution $P$ on $\real^m$
with support on a Riemannian $C^2$ manifold $M$ of dimension $d<m$. Given a point $p\in M$, the
question to be addressed is to determine the dimension $d$ of the tangent space of $M$ at $p$ using 
only information from sample points near $p$ (we want to allow for the value of $d$ to depend on the point $p$ and for $M$ to be disconnected). 

The simplest version of our dimension identification procedure is described by the following steps: 
\begin{enumerate}
\item For an appropriate value of the constant $C$, to be specified below, let
$k:=\lceil C\log(n)\rceil$. Assume, relabeling the sample if necessary, that $X_1,\dots, X_k$ are the 
$k$ nearest neighbors of $p$ in the sample, according to the euclidean distance in $\RR^m$.
\item Define the {\it angle-variance} $U$-statistic, $U_{k,n}$, by the formula
\beq\label{def:statistic}
U_{k,n}:=\frac{1}{\binom{k}{2}} \sum_{1\leq i<j\leq k} \left(\arccos\left\langle \frac{X_i-p}{\|X_i-p\|},\frac{X_j-p}{\|X_j-p\|}\right\rangle-\frac{\pi}{2}\right)^2,
\enq
where $\langle\cdot,\cdot\rangle$ denotes the dot product on $\real^m$.

\item  Estimate the unknown dimension $d$ as $\widehat d$, equal to the integer $r$ such that $\beta_{r}$ is closest to $U_{k,n}$, for a sufficiently large sample size $n$, where $\beta_r$ is the quantity defined by 
\beq\label{beta_d}
\beta_r:=
\begin{cases}
\frac{\pi^2}{4}-2\sum_{j=0}^s \frac{1}{(2j+1)^2}\text{ if $r-2=2s+1$ is odd or}\\
\frac{\pi^2}{12} -2\sum_{j=1}^s\frac{1}{(2j)^2}\text{ if $r-2=2s$ is even.}

\end{cases}
\enq
\end{enumerate}
The key idea of our estimator goes as follows: 
For large $n$ and the chosen value of $k$, the nearest neighbors
of $p$ in the data set, behave as uniform data on a small ball around $p$ in the embedded tangent space of $M$ at this point, and the corresponding unit vectors, $(X_i-p)/\|X_i-p\|$, are nearly
uniform on the unit sphere of the tangent space, $S^{d-1}$. For uniform data on $S^{d-1}$,
the expected angle between two random vectors is always $\pi/2$ (regardless of $d$), but
the variance of this angle decreases rapidly with $d$. Formula (\ref{beta_d}) gives the
value of this variance for every dimension $r$. Since our results below show that
the $U$-statistic, $U_{k,n}$, will converge in probability to $\beta_d$ for the actual
dimension of $M$ at $p$, estimation of $d$ by choosing the $r$ such that
$\beta_r$ closest to $U_{k,n}$ will be consistent.
An additional fact that helps in this convergence is that the variance of $U_{k,n}$,
which depends on the fourth moment of the angles, is also converging rapidly to zero.

The following subsection establishes useful facts about angles between 
random points on the unit sphere $S^{d-1}$ of $\real^d$ 
and, in particular, about moments of the function
\beq\label{kernel}
h(z,z')=\left(\arccos\left\langle z,z'\right\rangle-\frac{\pi}{2}\right)^2
\enq
when computed on data uniformly distributed on $S^{d-1}$.
Section~\ref{sec:3}, building on subsection \ref{subsec: part3}, develops
the theoretical results that serve as basis for the use
of $U_{k,n}$ on manifolds.

\subsection{Angle-variance statistics for pairs of uniform points on $S^{d-1}$}
\label{subsec: part3}

\begin{lemma}[\bf Angles between uniform vectors] \label{lem: anglePairs} Let $Z_1,Z_2$ be two independent vectors with the uniform distribution on the unit sphere $S^{d-1}\subseteq \RR^d$ and let $\Theta_d:=\arccos\langle Z_1,Z_2\rangle$ be the angle between them. The following statements hold:
\begin{enumerate}
\item The distribution of $\Theta_d$ is given by
\[\PP(\Theta_d\leq \alpha)= \frac{\int_0^{\alpha}\sin^{d-2}(\phi)d\phi}{\int_0^{\pi} \sin^{d-2}(\phi) d\phi}.\]

\item The moment generating function of $\Theta_d$, denoted by $\phi_{d-2}(s):=\EE\left[e^{s\Theta_d}\right]$ is given by
\[
\begin{array}{ccc}
\phi_{2k}(s)=\frac{e^{s\pi}-1}{s\pi}\prod_{j=1}^k \frac{(2j)^2}{(2j)^2+s^2} & &
\phi_{2k+1}(s)=\frac{e^{s\pi}+1}{2(s^2+1)}\prod_{j=1}^k \frac{(2j+1)^2}{(2j+1)^2+s^2}
\end{array}
\]
according to whether $d-2$ is even or odd respectively.

\item In particular $\EE[\Theta_d]=\frac{\pi}{2}$ for all $d$ and $\var[\Theta_d]=\beta_d$ where
\[
\beta_d:=\left\{
\begin{array}{lcl}
\dfrac{\pi^2}{4}-2\sum_{j=0}^k \dfrac{1}{(2j+1)^2}&&\text{ if $d-2=2k+1$ is odd or}\\
\dfrac{\pi^2}{12} -2\sum_{j=1}^k\dfrac{1}{(2j)^2}&&\text{ if $d-2=2k$ is even.}
\end{array}\right.
\]
\item The variance of the centered squared angle $\sigma_d^2 := \var \left(\Theta_d - \frac{\pi}{2}\right)^2$ is given by
\[\sigma_d^2 = \left\{
\begin{array}{ll}
 -\frac{\pi^4}{8} + 12 \sum\limits_{j=0}^{k} \frac{1}{(2j+1)^4} + 2\left( \frac{\pi^2}{4}-2\sum\limits_{j=0}^{k} \dfrac{1}{(2j+1)^2}\right)^2   & \text{ if $d-2=2k+1$ or}\\
 -\frac{\pi^4}{120} + 12 \sum\limits_{j=1}^{k} \frac{1}{(2j)^4} + 2 \left(\frac{\pi^2}{12} -2\sum\limits_{j=1}^{k}\dfrac{1}{(2j)^2}\right)^2  & \text{ if $d-2=2k$.}
\end{array} \right.\]

\end{enumerate}
\end{lemma}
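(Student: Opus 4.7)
My plan is to reduce all four statements to a unified analysis of the family of integrals $I_n(s) := \int_0^\pi e^{s\phi}\sin^n(\phi)\,d\phi$, from which both the distribution and its moment generating function should fall out transparently.

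For part (1), I would use the rotational invariance of the uniform distribution on $S^{d-1}$ to condition on $Z_1 = e_1$; then $\Theta_d = \arccos((Z_2)_1)$ with $Z_2$ still uniform on $S^{d-1}$. In spherical coordinates centered at $e_1$, the surface measure on $S^{d-1}$ factorizes as $\sin^{d-2}(\phi)\,d\phi$ times the surface measure of the equatorial $S^{d-2}$, so integrating out the equatorial directions immediately yields the stated density for $\Theta_d$. For part (2), two integrations by parts on $I_n(s)$—with boundary terms vanishing because $\sin(0)=\sin(\pi)=0$—should give the recursion $I_n(s) = \frac{n(n-1)}{s^2+n^2}\,I_{n-2}(s)$ for $n\geq 2$. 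Combined with the classical Wallis identity $I_n(0) = \frac{n-1}{n}\,I_{n-2}(0)$ this produces a clean recursion $\phi_n(s) = \frac{n^2}{s^2+n^2}\,\phi_{n-2}(s)$, which I iterate from the base cases $\phi_0(s) = (e^{s\pi}-1)/(s\pi)$ and $\phi_1(s) = (e^{s\pi}+1)/(2(s^2+1))$ (each a direct integration) to obtain the claimed product formulas.

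For part (3), the symmetry $\sin^{d-2}(\pi-\phi) = \sin^{d-2}(\phi)$ immediately forces $\EE[\Theta_d] = \pi/2$. For the variance I would extract the $s^2$-coefficient of $\log\phi_{d-2}(s)$: in the even case, $\log\phi_{2k}(s) = \log\bigl((e^{s\pi}-1)/(s\pi)\bigr) - \sum_{j=1}^k \log\bigl(1 + s^2/(2j)^2\bigr)$, and a Taylor expansion to second order produces $\beta_d$. Part (4) requires the fourth central moment, which I plan to extract from the centered MGF $M(s) := e^{-s\pi/2}\phi_{d-2}(s)$. The symmetry of $\Theta_d$ about $\pi/2$ makes $M$ an even function; in the even case this gives $M(s) = \frac{2\sinh(s\pi/2)}{s\pi}\prod_{j=1}^k \frac{1}{1 + s^2/(2j)^2}$, and in the odd case an analogous expression with $\cosh$ in the numerator. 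Writing $\log M(s) = L_2 s^2 + L_4 s^4 + O(s^6)$ using $\log(\sinh x / x) = x^2/6 - x^4/180 + O(x^6)$ (or $\log\cosh x = x^2/2 - x^4/12 + O(x^6)$) together with $\log(1+y) = y - y^2/2 + O(y^3)$, and then exponentiating, yields $\EE[(\Theta_d-\pi/2)^2] = 2L_2 = \beta_d$ and $\EE[(\Theta_d-\pi/2)^4] = 24L_4 + 12L_2^2$. Substituting into $\sigma_d^2 = \EE[(\Theta_d-\pi/2)^4] - \beta_d^2 = 24 L_4 + 8 L_2^2$ and regrouping should recover the stated formula.

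The main obstacle I foresee is the bookkeeping in part (4): several power series must be composed carefully and the resulting coefficients regrouped into the explicit closed form written in the lemma. The substantive analytic step is really the recursion for $I_n(s)$ in part (2); once it is in hand, everything else is driven by elementary Taylor expansions of the log-MGF.
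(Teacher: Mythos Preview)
Your proposal is correct and follows essentially the same route as the paper: spherical coordinates for part~(1), an integration-by-parts recursion for $\phi_n(s)$ in part~(2), and differentiation of the log-MGF (cumulant generating function) for parts~(3) and~(4). The only cosmetic difference is that you center the MGF first and work with the $\sinh$/$\cosh$ forms of $M(s)=e^{-s\pi/2}\phi_{d-2}(s)$, whereas the paper keeps the uncentered $\psi_{d-2}=\log\phi_{d-2}$ and invokes the standard identity $\mu_4=\psi^{(4)}(0)+3(\psi''(0))^2$; the two computations are equivalent since $\log M(s)=\psi_{d-2}(s)-s\pi/2$.
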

\begin{proof} 
\begin{enumerate}
	\item Passing to polar coordinates $r,\phi_1,\dots, \phi_{d-1}$ with $r\leq 0$, $0\leq \phi_j\leq \pi$ for $1\leq j \leq d-2$ and $0\leq \phi_{d-1}\leq 2\pi$. The probability that $\Theta_d\leq \alpha$ is precisely the fraction of the surface area of the sphere defined by the inequality  $0\leq \phi_1\leq \alpha$. Since the surface element of the sphere is given by
\[ dS=\sin^{d-2}(\phi_1)\sin^{d-3}(\phi_2)\dots\sin(\phi_{d-2})d\phi_1\dots d\phi_{d-1}\]  
the probability is given by
\[
\frac{\int_0^{\alpha}\int_0^{\pi}\dots \int_0^{\pi} \int_0^{2\pi} dS}{\int_0^{\pi}\int_0^{\pi}\dots \int_0^{\pi} \int_0^{2\pi} dS}=\frac{\int_0^{\alpha}\sin^{d-2}(\phi)d\phi}{\int_0^{\pi} \sin^{d-2}\phi d\phi}
\]
as claimed. 
	\item We begin with a claim
	\begin{claim}
	Let $u: \RR \rightarrow \RR$ be a $C^2$-function and define $$\EE_d( u(x)) := \frac{\int_0^{\alpha}u(x)n\sin^{d-2}(\phi)d\phi}{A_d}$$ where $A_d := \int_0^{\pi} \sin^{d-2}\phi d\phi.$ Then, the following recursion formula holds 
	\[d \;\EE_d(u(x)) = {d}\;\EE_{d-2}(u(x)) - \dfrac{1}{d}\;\EE_{d}(u''(x)).\]
	\end{claim}
	\begin{proof}
	Using integration by parts one can show a recursive formula for $A_d$ and conclude that 
	\[A_d := \left\{\begin{array}{lcl}
	\frac{(2k)!}{(2^k \;d!)^2}\frac{\pi}{2} & & \text{if } d - 2 = 2k \text{ or} \\
	\frac{(2^k k!)^2}{(2k+1)!)}&& \text{if } d - 2 = 2k + 1. \\
	\end{array} \right.\]
	Applying integration by parts twice and using our formula for $A_d$ gives the result. 
	\end{proof}
	In particular if we take $u(x) = e^{sx}$ we get
	\[\EE_d(e^{sx}) = \left(\dfrac{d^2}{d^2 + s^2} \right)\EE_{d-2}(e^{sx}).\]
	As a result we obtain the stated closed formula for the moment generating function.
	\item All densities are, like sine, symmetric around $\frac{\pi}{2}$ and the first statement follows. To ease the computations we introduce the cumulant-generating function $ \psi_{d-2} = \log(\EE ( e^{s\Theta_d})).$ Then it is immediate that 
	$\Var(\Theta_d) = \psi_{d-2}''(0).$ 
	We consider two cases, $d$ even and odd, first let us assume that $d = 2k + 2$. Then, we write the cumulant-generating function as 
	\[\psi_{d-2}(s) = \underbrace{\log\left(\frac{e^{s\pi}-1}{s\pi}\right)}_{t(s)}+ \underbrace{\sum\limits_{j=1}^{k}\log\left(\dfrac{(2j)^2}{(2j)^2+s^2}\right)}_{r(s)}.\] 
	After some dry algebra we get $t''(0) = \frac{\pi^2}{12}$ and $r''(0) = 
	- 2\sum_{j=1}^{k} \frac{1}
	{(2j)^2}$, which gives the result for the even case. The odd case follows from an analogous argument.  
	\item Let $\mu_j$ be the $j$th moment of the random variable $\left(\Theta_d - \frac{\pi}{2}
	\right)$, i.e. $\mu_j = \EE\left(\Theta_d - \frac{\pi}{2}\right)^j$. It is well known that $\mu_2 = 
	\psi''(0)$ and $\mu_4 = \psi^{(4)}(0) + 3\left(\psi''(0)\right)^2.$ Therefore, 
\begin{equation} \label{eq:VarianceComulants}
	\Var \left(\Theta_d - \frac{\pi}{2}\right)^2 = \mu_4 - \mu_2^2 = \psi^{(4)}(0) + 2\left(\psi''(0)
	\right)^2.
\end{equation}
Again, consider two cases: $d$ even and $d$ odd. Suppose $d = 2k - 2$, just as before, we calculate 
$t^{(4)}(0) = -\frac{\pi^4}{120}$ and $r^{(4)}(0) = 12 \sum_{j=1}^{k} \frac{1}{(2j)^4}.$ Substituting 
both these into \eqref{eq:VarianceComulants} yields the claim. A similar argument can be applied to 
the odd case. 
  
\end{enumerate}
\end{proof}
At first glance the formulas for $\beta_d$ and $\sigma_d$ might seem a little complicated. In order to derive our results we need tangible decrease rates in terms of the dimension. The following claim gives us an easy way to interpret these quantities.

\begin{claim}\label{beta_sigma_bounds}
The following bounds hold for $\beta_d$ and $\sigma_d^2$:
\begin{itemize} 
	 	\item[]
 	\begin{equation}
 	\frac{1}{d} \leq \beta_d\leq \frac{1}{d-1} \qquad \qquad \text{for } d \geq 1,
 	\end{equation}
 	\item[] 
 	\begin{equation}
 	\dfrac{1}{2d^2}  \leq \sigma_d^2 \leq \dfrac{2}{(d-1)^2} \qquad \text{for }d \geq 4,
 	\end{equation}
 	moreover the upper bound for $\sigma_d^2$ holds for all $d\geq 1.$
 \end{itemize} 
\end{claim}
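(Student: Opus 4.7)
\medskip

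\noindent\textbf{Plan of proof.} The key observation is that the Basel-type closed forms
\[
\sum_{j=0}^{\infty}\frac{1}{(2j+1)^2}=\frac{\pi^2}{8}, \qquad
\sum_{j=1}^{\infty}\frac{1}{(2j)^2}=\frac{\pi^2}{24},\qquad
\sum_{j=0}^{\infty}\frac{1}{(2j+1)^4}=\frac{\pi^4}{96},\qquad
\sum_{j=1}^{\infty}\frac{1}{(2j)^4}=\frac{\pi^4}{1440}
\]
convert the constants $\pi^2/4$, $\pi^2/12$, $\pi^4/8$, $\pi^4/120$ appearing in the definitions of $\beta_d$ and $\sigma_d^2$ into tails of the corresponding series. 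Writing $d=2s+3$ in the odd-$(d-2)$ case (and $d=2s+2$ in the even-$(d-2)$ case), this rewriting yields
\[
\beta_d=2\sum_{j=s+1}^{\infty}\frac{1}{(2j+\varepsilon)^2},\qquad \sigma_d^2=2\beta_d^2-12\sum_{j=s+1}^{\infty}\frac{1}{(2j+\varepsilon)^4},
\]
where $\varepsilon\in\{0,1\}$ depending on parity. This presentation makes both quantities visibly positive tails that I can squeeze with integral estimates.

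\medskip

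\noindent\textbf{Bounds on $\beta_d$.} I would sandwich the remaining series by monotonicity and convexity of $f(x)=(2x+\varepsilon)^{-2}$. The naive lower Riemann sum gives
\[
\sum_{j=s+1}^{\infty}f(j)\ge \int_{s+1}^{\infty}f(x)\,dx,
\]
which, after multiplying by $2$, yields $\beta_d\ge 1/d$ in both parities. For the upper bound the simple left-Riemann inequality only produces $1/(d-2)$, which is too weak, so I would instead use the midpoint rule for the convex $f$ (note $f''>0$ on the relevant domain):
\[
f(j)\le \int_{j-1/2}^{j+1/2}f(x)\,dx,\qquad \text{hence}\qquad \sum_{j=s+1}^{\infty}f(j)\le \int_{s+1/2}^{\infty}f(x)\,dx.
\]
Evaluating the right-hand side gives $1/(4(s+1))$, so $\beta_d\le 1/(d-1)$.

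\medskip

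\noindent\textbf{Bounds on $\sigma_d^2$.} The upper bound is almost immediate from the tail representation: the correction $-12\sum_{j\ge s+1}(2j+\varepsilon)^{-4}$ is non-positive, so
$\sigma_d^2\le 2\beta_d^2\le 2/(d-1)^2$, and for $d\in\{1,2,3\}$ either the bound is vacuous or a direct check suffices. For the lower bound I would apply the same midpoint/integral estimate to the quartic tail, using convexity of $g(x)=(2x+\varepsilon)^{-4}$:
\[
12\sum_{j=s+1}^{\infty}\frac{1}{(2j+\varepsilon)^4}\le 12\int_{s+1/2}^{\infty}\frac{dx}{(2x+\varepsilon)^4}=\frac{2}{(d-1)^3}.
\]
Combined with the already-proved $\beta_d\ge 1/d$, this gives
$\sigma_d^2\ge \frac{2}{d^2}-\frac{2}{(d-1)^3}$, and the target lower bound $\sigma_d^2\ge 1/(2d^2)$ reduces to the elementary polynomial inequality $3(d-1)^3\ge 4d^2$, which I would verify directly at $d=4$ and then by an easy comparison (e.g.\ $(d-1)/d\ge 3/4$ for $d\ge 4$) for the remaining cases.

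\medskip

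\noindent\textbf{Main obstacle.} The conceptually routine step that still needs care is the choice of the midpoint (rather than left or right) Riemann bound: the weaker endpoint inequalities produce $1/(d-2)$ and $1/(d-1)^2$-style constants that are off by one unit in the denominator and would fail to match the stated bounds. So the principal check is that the summands are convex on the relevant ranges, after which the rest is bookkeeping between the two parities and a one-line polynomial inequality.
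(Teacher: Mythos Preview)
Your proposal is correct and follows essentially the same route as the paper: rewrite $\beta_d$ and the quartic piece of $\sigma_d^2$ as tails of the Basel-type series, bound those tails from below by $\int_{s+1}^\infty$ and from above by $\int_{s+1/2}^\infty$, and finish $\sigma_d^2\ge 1/(2d^2)$ via the elementary inequality $3(d-1)^3\ge 4d^2$ for $d\ge 4$. The only difference is cosmetic: you justify the shifted upper integral through the midpoint rule and convexity of $x\mapsto (2x+\varepsilon)^{-2}$, whereas the paper simply writes down the inequality $\sum_{j\ge k+1}f(j)\le \int_{k+1/2}^{\infty}f(x)\,dx$ without comment; your extra sentence is a welcome clarification but not a different argument.
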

\begin{proof}
We distinguish two cases according on whether $d\geq 1$ is even or odd.
If $d$ is even, we can define $k$ by the equality $d-2=2k$ and compute 
\[\beta_d=2\sum_{j=k+1}^{\infty} \frac{1}{(2j)^2}.\]
Since this series consists of monotonically decreasing terms and $d\geq 2$ we conclude that
\[ \dfrac{1}{d} = \frac{1}{2k+2}=2 \int_{k+1}^{\infty} \frac{1}{(2x)^2}dx\leq 2\sum_{j=k+1}^{\infty} \frac{1}{(2j)^2}\leq 2\int_{k+\frac{1}{2}}^{\infty} \frac{1}{(2x)^2}dx=\frac{1}{2k+1} = \dfrac{1}{d-1}
% 2\int_{n}^{\infty} \frac{1}{(2x)^2}dx=\frac{1}{2n} 
\]
as claimed. On the other hand, notice that the other term concerning the variance can be written as
\[12 \sum\limits_{j=1}^{k}\dfrac{1}{(2j)^4} - \dfrac{\pi^4}{120} = - 12 \sum\limits_{j=k+1}^{\infty}\dfrac{1}{(2j)^4},\]
which again can be bound by 
\[\dfrac{2}{d^3} = \frac{1}{4(k+1)^3}=12 \int_{k+1}^{\infty} \frac{1}{(2x)^4}dx\leq 12
\sum_{j=k+1}^{\infty} \frac{1}{(2j)^4}\leq 2\int_{k+\frac{1}{2}}^{\infty} \frac{1}{(2x)^2}dx=\frac{2}
{(2k+1)^3} = \dfrac{2}{(d-1)^3}.\]
Then, we get 
\[\dfrac{1}{2d^2}  \leq \dfrac{2}{d^2} - \dfrac{2}{(d-1)^3} \leq \sigma_d^2 \leq \dfrac{2}{(d-1)^2} - 
\dfrac{2}{d^3} \leq \dfrac{2}{(d-1)^2}\]
where the first inequality follows since $d\geq 4.$
The case when $d$ is odd is  proven similarly.
\end{proof}

\section{Theoretical foundations}
\label{sec:3}

\subsection{Statement of results}\label{statements}

In this subsection we state the theoretical results that serve as basis for the proposed
methodology. Proofs are given in the following subsection. The setting is the following: An i.i.d.
sample, $X_1,\dots,X_n$, is available from a distribution $P$ on $\real^m$. Additionally we have access to a distingushied point $p$, and near this point the data live on a Riemannian $C^2$ manifold $M$, of dimension $d<m$. Furthermore, at $p$ the distribution $P$ has a Lipschitz continuous non-vanishing density function $g$, with respect to the volume measure on $M$. Without loss of generality,
we assume that $p = 0$. Then, we have

\begin{proposition}[\bf Behavior of nearest neighbors]\label{thm: Chernoff-Okamoto} 
For a positive constant $C$, define $k=\lceil C\log(n)\rceil$ and let $R(n)=L_{k+1}(0)$ be the euclidean distance in $\RR^m$ from $p=0$ to its $(k+1)$-st nearest neighbor in the sample $X_1,\dots, X_n$. Define $B_{R(n)}(0)$ to be the open ball 
of radius $R(n)$ around $0$ in $\real^m$. Then, the following holds true:
\begin{enumerate}
  \item For any sufficiently large $C> 0$, we have that, with probability one, for large enough $n$ ($n\geq n_0$, for some $n_0$ depending on the
actual sample), $R(n)\leq r(n)$, where
$$r(n):=O\left( \left(\frac{\log(n)}{n}\right)^{\frac{1}{d}}\right)$$
is a deterministic function that only depends on the distribution $P$ at $p$ and $C.$
\item Conditionally on the value of $R(n)$, the $k$-nearest-neighbors of $0$ in the sample $X_1,\dots, X_n$, 
have the same distribution as an independent sample of size $k$ from the 
distribution with density $g_n$,
equal to the normalized restriction of $g$ to $M\cap B_{R(n)}(0)$.
\end{enumerate}  
\end{proposition}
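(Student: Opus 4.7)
For part (1), the plan is to reduce the tail behavior of $R(n)$ to a Chernoff-style concentration bound on a binomial count and then invoke Borel--Cantelli. Introduce a deterministic comparison radius $r(n) := (c_0 \log(n)/n)^{1/d}$, where $c_0$ will be chosen large relative to $C$. Since $M$ is a $C^2$ submanifold of $\RR^m$ and $g$ is Lipschitz and bounded away from zero at $p=0$, tubular-neighborhood considerations give, for all small $r$,
\[ p(r) := \PP(X_1 \in B_r(0)) \geq c_1 \, r^d, \]
with $c_1 > 0$ depending only on $g(0)$, the dimension $d$, and the $C^2$ geometry of $M$ near $p$. The key observation is that $\{R(n) \leq r(n)\}$ is exactly the event that $N_n := \#\{i \leq n : X_i \in B_{r(n)}(0)\}$ is at least $k+1$.

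Plugging in $r(n)$, one has $\EE[N_n] = n\, p(r(n)) \geq c_1 c_0 \log(n)$, so choosing $c_0$ with $c_1 c_0 > 2C$ makes $k+1 = \lceil C\log(n)\rceil + 1$ at most half of the mean for $n$ large. The Chernoff--Okamoto lower tail inequality for binomials then yields
\[ \PP(N_n < k+1) \leq n^{-\gamma} \]
for some $\gamma > 1$, provided $C$ (and hence $c_0$) is taken large enough. Summability of $n^{-\gamma}$ together with the first Borel--Cantelli lemma gives $R(n) \leq r(n)$ for all $n$ past some sample-dependent threshold, almost surely.

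Part (2) is a standard conditioning argument exploiting the i.i.d.\ nature of the sample. Fix a realization of the rank statistics: let $X_{(1)}, \dots, X_{(k+1)}$ denote the sample points ordered by Euclidean distance to $0$, with $\|X_{(k+1)}\| = R(n) = r$. Independence of the original $n$-sample makes the conditional joint law of $(X_{(1)},\dots,X_{(k)})$ given $X_{(k+1)} = y$ with $\|y\|=r$ a product of $k$ copies of the law of $X_1$ restricted to $M \cap B_r(0)$ and renormalized; by continuity of $g$ this product has the density $g_n$ in the statement. Marginalizing over the location of $X_{(k+1)}$ on the sphere $\{\|y\|=r\}$ does not alter this i.i.d.\ structure, which is precisely the asserted conditional distribution.

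The main obstacle is the first step: making the volume expansion near $p$ rigorous so that $p(r) \geq c_1 r^d$ uniformly for small $r$, with controlled error from both the $C^2$ curvature of $M$ in the tubular-neighborhood approximation and the Lipschitz oscillation of $g$. Once that lower bound is in hand, the Chernoff step and the Borel--Cantelli conclusion in part (1) are essentially mechanical, and part (2) reduces to bookkeeping with order statistics.
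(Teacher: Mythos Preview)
Your proposal is correct and follows essentially the same route as the paper: a Chernoff--Okamoto lower-tail bound on the binomial count in $B_{r(n)}(0)$ combined with Borel--Cantelli for part~(1), and the i.i.d.\ order-statistics structure for part~(2). Two minor remarks: you correctly use a \emph{lower} bound $p(r)\geq c_1 r^d$ (the paper's text reads ``$\leq$'' but its logic requires ``$\geq$''), and for part~(2) the paper simply invokes a result of Kaufmann--Reiss rather than sketching the conditioning argument as you do---be careful that your ordered tuple $(X_{(1)},\dots,X_{(k)})$ is not literally a product, only the unordered set of the $k$ nearest neighbors is an i.i.d.\ sample from $g_n$.
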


In what follows, with a slight abuse of notation, we will write $X_1,X_2,\dots, X_k$ to denote the
$k$ nearest neighbors of 0 in the sample and assume that these follow the distribution with
density $g_n$ of Proposition \ref{thm: Chernoff-Okamoto}. Let $\pi: \RR^m\rightarrow T_pM$ be the 
orthogonal projection onto the (embedded) tangent space to $M$ at $p=0$. For a nonzero
$X\in \real^m$, let $W:=\pi(X)$,
$\widehat{X}:=\frac{X}{\|X\|}$ and $\widehat{W}:=\frac{W}{\|W\|}$. $\widehat{W}$ takes values in the $(d-1)$-dimensional unit 
sphere $S^{d-1}$ of the tangent space of $M$ at $0$. 

Our first Lemma bounds the difference between the inner products 
$\langle\widehat{X_i},\widehat{X_j}\rangle$ 
and $\langle\widehat{W_i},\widehat{W_j}\rangle$ in terms of the length of projections. In this
Lemma, the random nature of the $X_i$ is irrelevant.

\begin{lemma}[\bf Basic projection distance bounds]\label{lem: basicIneqs} For any $X,X_1,X_2\in M$:
\begin{enumerate}
\item $\|X-\pi X\|=O(\|\pi X\|^2)$
\item $\|\widehat{X} - \widehat{W}\|=O(\|\pi X\|)$
\item The cosine of the angle between $X_1$ and $X_2$ is close to that between $W_1$ and $W_2$. 
More precisely, 
\[|\langle \widehat{X_1},\widehat{X_2}\rangle - \langle \widehat W_1,\widehat W_2\rangle| \leq Cr\]
for some $C\in \RR$, whenever $r\geq \|\pi(X_i)\|$ for $i=1,2$.
\end{enumerate}
\end{lemma}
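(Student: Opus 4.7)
The plan is to reduce everything to a second-order Taylor expansion of the graph parameterization of $M$ near $p=0$. Since $M$ is a $C^2$ submanifold through $0$ with tangent space $T_0M$, I would invoke the local graph representation: there is a neighborhood $U$ of $0$ in $T_0M$ and a $C^2$ map $f\colon U\to (T_0M)^{\perp}$ with $f(0)=0$ and $Df(0)=0$ such that $M$ locally coincides with $\{w+f(w):w\in U\}$. Taylor's theorem applied to $f$ then gives $\|f(w)\|\leq K\|w\|^2$ for some constant $K$ depending only on the $C^2$ norm of $f$ on a fixed neighborhood.

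For item (1), I would observe that for $X\in M$ close enough to $0$ we may write $X=\pi X+f(\pi X)$, where the two summands are orthogonal. Hence $\|X-\pi X\|=\|f(\pi X)\|\leq K\|\pi X\|^2$, which is exactly the claim. For item (2), I would first use the orthogonal decomposition and (1) to compute
\[\|X\|^2=\|\pi X\|^2+\|X-\pi X\|^2=\|\pi X\|^2(1+O(\|\pi X\|^2)),\]
whence $\|X\|=\|\pi X\|(1+O(\|\pi X\|^2))$ and $|\,\|X\|-\|\pi X\|\,|=O(\|\pi X\|^3)$. Then I would split
\[\widehat X-\widehat W=\frac{X-\pi X}{\|X\|}+\pi X\left(\frac{1}{\|X\|}-\frac{1}{\|\pi X\|}\right)\]
and bound each piece: the first is $O(\|\pi X\|^2)/\|\pi X\|=O(\|\pi X\|)$ by (1), and the second is $\|\pi X\|\cdot O(\|\pi X\|^3)/(\|X\|\|\pi X\|)=O(\|\pi X\|^2)$, giving (2).

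Item (3) is then essentially automatic: adding and subtracting $\langle\widehat W_1,\widehat X_2\rangle$ yields
\[\langle\widehat X_1,\widehat X_2\rangle-\langle\widehat W_1,\widehat W_2\rangle=\langle\widehat X_1-\widehat W_1,\widehat X_2\rangle+\langle\widehat W_1,\widehat X_2-\widehat W_2\rangle,\]
so by Cauchy--Schwarz and the fact that $\widehat X_i,\widehat W_i$ are unit vectors, the absolute value is at most $\|\widehat X_1-\widehat W_1\|+\|\widehat X_2-\widehat W_2\|$, which by (2) is $O(\|\pi X_1\|)+O(\|\pi X_2\|)\leq Cr$ whenever $\|\pi X_i\|\leq r$ for $i=1,2$.

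There is no serious obstacle here; the only point requiring a little care is uniformity of the implicit constants. Since all three bounds derive from a single Taylor estimate for $f$ on a fixed neighborhood of $0$, one can choose a radius $\rho>0$ small enough that the graph representation and the second-order bound $\|f(w)\|\leq K\|w\|^2$ both hold on $\{\|w\|\leq\rho\}$; the constants $C$ appearing in items (1)--(3) then depend only on $K$ (hence on the second fundamental form of $M$ at $p$) and are valid for all $X,X_1,X_2\in M$ with projections of norm at most $\rho$, which is the relevant regime for the subsequent application to nearest neighbors.
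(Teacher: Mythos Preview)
Your proposal is correct and follows essentially the same route as the paper: both establish the local graph parameterization $X=\pi X+f(\pi X)$ with $f(0)=0$, $Df(0)=0$, use Taylor's theorem to get $\|f(w)\|\leq K\|w\|^2$ for item~(1), split $\widehat X-\widehat W$ via an intermediate term and the triangle inequality for item~(2), and obtain item~(3) by adding and subtracting a mixed inner product. The only cosmetic difference is that you exploit the orthogonality $\|X\|^2=\|\pi X\|^2+\|X-\pi X\|^2$ to obtain the sharper estimate $\bigl|\,\|X\|-\|\pi X\|\,\bigr|=O(\|\pi X\|^3)$, whereas the paper uses the reverse triangle inequality to get $O(\|\pi X\|^2)$; either suffices for the required $O(\|\pi X\|)$ conclusion.
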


Using Lemma \ref{lem: basicIneqs}, we can establish the following approximation. Let
$X_1,\dots, X_k$ be the $k$-nearest-neighbors from the sample to $p=0$ in $\RR^m$ . 
Define $W_i$ and $\widehat{W_i}$ as above and let $V_{k,n}$ be given by the formula
\[
V_{k,n}:=\frac{1}{\binom{k}{2}} \sum_{1\leq i<j\leq k} \left(\arccos\left\langle \widehat{W_i},\widehat{W_j}\right\rangle-\frac{\pi}{2}\right)^2.
\]

\begin{proposition}[\bf Approximating the statistic via its
tangent analogue]\label{thm: mainPart1} For $k=C\log(n)$, as above, we have
\begin{enumerate}
\item The sequence $k(U_{k,n}-V_{k,n})$ converges to $0$ in probability as $n\rightarrow \infty$.
\item $\lim_{n\rightarrow \infty}\EE\left(U_{k,n}-V_{k,n}\right)=0$.
\end{enumerate}
\end{proposition}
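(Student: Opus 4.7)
Both parts will reduce to a uniform per-pair bound on
\[
T_{ij} := \left(\arccos u_{ij} - \tfrac{\pi}{2}\right)^2 - \left(\arccos v_{ij} - \tfrac{\pi}{2}\right)^2, \qquad u_{ij} := \langle \widehat{X_i}, \widehat{X_j}\rangle,\ v_{ij} := \langle \widehat{W_i}, \widehat{W_j}\rangle.
\]
Using the identity $\arccos t - \pi/2 = -\arcsin t$ and factoring as a difference of squares, I can rewrite $T_{ij} = (\arcsin u_{ij} - \arcsin v_{ij})(\arcsin u_{ij} + \arcsin v_{ij})$; the sum factor is at most $\pi$ in absolute value, so the task reduces to bounding the difference factor in terms of $|u_{ij}-v_{ij}|$.

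The technical heart of the argument is a uniform H\"older-$1/2$ estimate
\[
|\arcsin u - \arcsin v|^2 \leq C\,|u - v| \qquad \text{for all } u,v \in [-1,1],
\]
with absolute constant $C$. A naive Lipschitz bound is unavailable because $(\arcsin)'$ blows up at $\pm 1$. I would set $A = \arcsin u$ and $B = \arcsin v$, use the identity $u - v = 2\cos((A+B)/2)\sin((A-B)/2)$, and apply Jordan's inequality $|\sin x| \geq \tfrac{2}{\pi}|x|$ for $|x| \leq \pi/2$ to the factor $\sin((A-B)/2)$. A short case split according to whether $A,B$ lie in $[0,\pi/2]$, in $[-\pi/2,0]$, or on opposite sides of $0$ suffices to control the factor $1/|\cos((A+B)/2)|$ uniformly. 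Combined with the trivial bound on the sum factor this gives $|T_{ij}| \leq \pi \sqrt{C\,|u_{ij}-v_{ij}|}$.

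Now I invoke the geometry: Lemma~\ref{lem: basicIneqs}(3) applied with $r = R(n)$ (valid since $\|\pi(X_i)\| \leq \|X_i\| \leq R(n)$) gives $|u_{ij}-v_{ij}| \leq C' R(n)$, and Proposition~\ref{thm: Chernoff-Okamoto}(1) upgrades this to the almost-sure bound $R(n) \leq r(n) = O((\log n/n)^{1/d})$ for $n$ large. Averaging over the $\binom{k}{2}$ pairs,
\[
|U_{k,n}-V_{k,n}| \;\leq\; \binom{k}{2}^{-1}\sum_{i<j}|T_{ij}| \;\leq\; C''\sqrt{R(n)} \;=\; O\!\left((\log n/n)^{1/(2d)}\right) \qquad \text{a.s.}
\]
Multiplying by $k = \lceil C\log n\rceil$ yields $k\,|U_{k,n}-V_{k,n}| = O\bigl(\log n \cdot (\log n/n)^{1/(2d)}\bigr) \to 0$ almost surely, proving (1) in the stronger a.s.\ form and hence in probability. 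For (2), both $U_{k,n}$ and $V_{k,n}$ take values in $[0,\pi^2/4]$, so $|U_{k,n}-V_{k,n}|$ is uniformly bounded; combined with a.s.\ convergence to zero, the bounded convergence theorem delivers $\EE[U_{k,n}-V_{k,n}] \to 0$.

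The hard part will be the H\"older-$1/2$ step, because of the endpoint blow-up of $(\arcsin)'$. An alternative route would split on the event $\{|v_{ij}| \leq 1-\delta\}$, use the local Lipschitz constant $O(\delta^{-1/2})$ on the good event, use the trivial bound $\pi^2/4$ on the bad event, and optimize in $\delta$; this works but requires quantitative information on the distribution of $v_{ij}$ near $\pm 1$, whereas the H\"older route is purely deterministic and needs no probabilistic input beyond Lemma~\ref{lem: basicIneqs} and Proposition~\ref{thm: Chernoff-Okamoto}.
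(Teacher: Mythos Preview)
Your approach is essentially the paper's: bound the inner-product difference via Lemma~\ref{lem: basicIneqs}, convert to an angle difference via a H\"older-$1/2$ estimate for $\arccos$, average, multiply by $k$, and invoke bounded convergence for part~(2). The paper isolates the H\"older step as a separate lemma (Lemma~\ref{lem: basicArccos}) and proves it by direct integration of $(1-x^2)^{-1/2}$, pushing the interval to the worst case near the endpoint; you instead use the product-to-sum identity and Jordan's inequality.

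One imprecision in your sketch: in the same-sign case (say $A,B\in[0,\pi/2]$) the factor $1/|\cos((A+B)/2)|$ is \emph{not} uniformly bounded---take $A=B$ close to $\pi/2$. What actually rescues the argument is the constraint $\max(A,B)\le \pi/2$, which forces $|A-B|/2 \le \pi/2 - (A+B)/2$ and hence, by Jordan again, $\cos((A+B)/2)=\sin(\pi/2-(A+B)/2)\ge \frac{2}{\pi}\cdot\frac{|A-B|}{2}$. Feeding this back into $|u-v|=2|\cos((A+B)/2)||\sin((A-B)/2)|\ge \frac{2}{\pi}|\cos((A+B)/2)||A-B|$ gives $|u-v|\ge \frac{2}{\pi^2}|A-B|^2$ as needed. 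The opposite-sign case is genuinely uniform since then $(A+B)/2\in[-\pi/4,\pi/4]$. With this fix your route goes through and is equivalent to the paper's.
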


When $X$ comes from the distribution producing the sample, but is restricted to fall very
close to 0, the distribution of $\pi X$ will be nearly uniform in a ball centered at 0
in  $T_pM$. This will allow us to establish a coupling between the normalized projection
$\widehat{W}$ and a variable $Z$, uniformly distributed on the unit sphere of $T_pM$, an
approximation that leads to the asymptotic distribution of  $U_{k,n}$. 
Some geometric notation must be introduced to describe these results.
Since near 0, $M\subseteq \RR^m$ is a Riemannian submanifold of dimension $d$, 
it inherits, from the euclidean inner product in $\RR^m$, a smoothly varying inner product 
$l_p: T_pM\times T_pM\rightarrow \RR$, given by $l_p(u,v)=\langle i_{*}(u),i_{*}(v)\rangle$,
where $i: M\rightarrow \RR^m$ is the inclusion with differential $i_{*}$. This metric determines a 
differential $d$-form $\Omega_M$ which, in terms of local coordinates $\partial_i$ for $T_pM$ and dual 
coordinates $dx_i$ of $T_pM^*$ with $i=1,\dots d$, is given by $\Omega_M:=\sqrt{\det(\langle 
\partial_i,\partial_j\rangle)_{1\leq i,j\leq d})} dx_1\wedge\dots\wedge dx_d$. The differential form 
endows $M$ with a volume measure $\nu(U)=\int_U \Omega_M$. We say that a random variable $A$ on $M$ 
has density $g:M\rightarrow\RR$ if the distribution $\mu_A$ of $A$ satisfies $\mu_A(D)=\int_D g
\Omega_M$ for all borel sets $D$ in $M$.

If $X$ is a random variable taking values on $M$ with density $g$ and $r$ is a positive real number, 
let $X(r)$ be a random variable with distribution $g_r$ given by the normalized restriction of $g$ to 
$M\cap B_r(0)$, that is:

\[
g_r(z)=\begin{cases}
\frac{g(z)}{\int_{B_r(0)\cap M}g\Omega_M}\text{, if $z\in M\cap B_r(0)$ and}\\
0\text{, otherwise.}
\end{cases}
\]

Define $W(r):=\pi(X(r))$. The following geometric Lemma will be used 
for relating the densities of $X(r)$ and $W(r)$.

\begin{lemma}[\bf Tangent space approximations]\label{lem: basicGeom} The following statements hold for all sufficiently small $r$ and $p=0$ in $M$.
\begin{enumerate}
\item The map $\pi: B_{r}(0)\cap M\rightarrow \pi(B_{r}(0)\cap M)$ is a diffeomorphism. Let $\Phi: B_{r}(0)\cap T_pM \rightarrow B_{r}(0)\cap M$ be its inverse.  
\item The inclusion $\pi(B_{r}(0)\cap M)\subseteq B_r(0)\cap T_pM$ holds and moreover $|\lambda(B_r(0)\cap T_pM)-\lambda(\pi(B_{r}(0)\cap M))|=O(r)$ where $\lambda$ denotes the Lebesgue measure on $T_pM$.
\item The following equality holds:
\[ \left|1-\sqrt{\det\left\langle \frac{\partial \Phi}{\partial x_i},  \frac{\partial \Phi}{\partial x_j}\right\rangle_{1\leq i,j\leq d}}\right|=O(r)\]
\end{enumerate}
\end{lemma}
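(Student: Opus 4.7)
The overall plan is to exploit the $C^2$ submanifold hypothesis by passing to graph coordinates at $p=0$, so that each of the three assertions reduces to Taylor estimates on a single auxiliary function. Choose orthogonal coordinates on $\RR^m$ so that $T_pM$ is the span of the first $d$ basis vectors and write $\RR^m=T_pM\oplus T_pM^\perp$. Since $M$ is a $C^2$ submanifold through $0$, the implicit function theorem supplies a neighborhood $V$ of $0$ in $T_pM$ and a $C^2$ map $f:V\to T_pM^\perp$ with $f(0)=0$ and $Df(0)=0$ such that a neighborhood of $0$ in $M$ is precisely the graph $\{(y,f(y)):y\in V\}$. For all sufficiently small $r$ the ambient ball $B_r(0)$ is contained in the domain of this chart, so $B_r(0)\cap M$ lies in a single graph piece and $\pi$ acts as $(y,f(y))\mapsto y$; its inverse $\Phi(y)=(y,f(y))$ is $C^2$, which establishes (1).

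For (2), orthogonality of $\pi$ gives $\|\pi(x)\|\leq\|x\|$, hence the inclusion $\pi(B_r(0)\cap M)\subseteq B_r(0)\cap T_pM$. For the measure comparison I would prove the reverse containment up to a small shrinking. Since $f(0)=0$ and $Df(0)=0$, the $C^2$ assumption and Taylor's theorem yield $\|f(y)\|\leq C\|y\|^2$ near $0$, so
\[\|\Phi(y)\|^2=\|y\|^2+\|f(y)\|^2\leq\|y\|^2\bigl(1+C^2\|y\|^2\bigr).\]
Setting $r':=r/\sqrt{1+C^2r^2}$, any $y\in B_{r'}(0)\cap T_pM$ lies in $V$ for small $r$ and satisfies $\Phi(y)\in B_r(0)\cap M$, hence $y\in\pi(B_r(0)\cap M)$. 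Because $r-r'=O(r^3)$, the Lebesgue measures differ by $O(r^{d+2})$, which is well within the required $O(r)$.

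For (3), differentiating the explicit formula $\Phi(y)=(y,f(y))$ gives $\partial_i\Phi(y)=e_i+(0,\partial_if(y))$, so
\[\bigl\langle\partial_i\Phi(y),\partial_j\Phi(y)\bigr\rangle=\delta_{ij}+\bigl\langle\partial_if(y),\partial_jf(y)\bigr\rangle.\]
Since $\partial_if$ is $C^1$ and $\partial_if(0)=0$, one has $\|\partial_if(y)\|=O(\|y\|)=O(r)$ uniformly on $\|y\|\leq r$. The Gram matrix is therefore $I+E$ with each entry of $E$ of order $O(r^2)$, so its determinant equals $1+O(r^2)$ and the square root gives $|1-\sqrt{\det\langle\partial_i\Phi,\partial_j\Phi\rangle}|=O(r^2)=O(r)$. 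The only delicate point in this program is verifying that $B_r(0)\cap M$ sits inside a single graph chart for all small $r$; this follows from the submanifold definition together with the fact that the graph neighborhood is open in $M$. Apart from this bookkeeping, everything reduces to multivariable calculus estimates on $f$, so I do not anticipate any substantive obstacle.
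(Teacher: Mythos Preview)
Your proof is correct and follows essentially the same route as the paper: both pass to graph coordinates $\Phi(y)=(y,f(y))$ with $f(0)=0$, $Df(0)=0$, use the quadratic bound $\|f(y)\|\leq C\|y\|^2$ to sandwich $\pi(B_r(0)\cap M)$ between two tangent balls, and read off part~(3) from the Gram matrix $I+E$ with entries of order $O(r^2)$. Your estimates are in fact slightly sharper than the paper's (you get $r-r'=O(r^3)$ and correctly note $\partial_i f=O(r)$ rather than $O(r^2)$), but the argument is the same.
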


Let $D(r)$ be a random variable uniformly distributed in $B_r(0)\cap T_pM$ and note that 
$Z=\widehat{D}(r):=\frac{D(r)}{\|D(r)\|}$ is uniformly distributed on the unit sphere $S^{d-1}$, regardless of the value of $r$. Our next Lemma shows that under weak hypotheses there is a coupling between $W(r)$ and $D(r)$ which concentrates on the diagonal as $r$ decreases. 

\begin{lemma}[\bf Coupling]\label{lem: coupling} Let $r$ denote a small positive number. With $D(r)$ as above and $Z$ a random 
vector with the uniform distribution on the
unit sphere, $S^{d-1}$ of $T_pM$, 
if the density $g$ of $X$ in $M$, near 0, is locally Lipschitz continuous and  
nonvanishing at 0, then the following hold:
\begin{enumerate}
\item There exists a coupling $A(r)=(W(r),D(r))$ and a constant $C>0$ such that $\PP\{W(r)\neq D(r)\}
\leq Cr$ for all sufficiently small $r$. 
\item There exists a coupling $A'(r)=(\widehat{W(r)},Z)$ such that $\PP\left\{\widehat{W(r)}\neq Z
\right\}\leq Cr$ for all sufficiently small $r$. 
\end{enumerate}
\end{lemma}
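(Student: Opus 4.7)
The plan is to prove part (1) by bounding the total variation distance between the density of $W(r)$ and the uniform density on $B_r(0)\cap T_pM$, then invoking the standard maximal coupling construction; part (2) will follow immediately from part (1), since $\widehat{W(r)}=\widehat{D(r)}=Z$ whenever $W(r)=D(r)\neq 0$.

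First I will write down the density of $W(r)$. Since by part (1) of the tangent space approximations lemma the map $\pi:B_r(0)\cap M\to \pi(B_r(0)\cap M)$ is a diffeomorphism with inverse $\Phi$, the change of variables formula gives
\[
f_{W(r)}(w)=\frac{g(\Phi(w))\sqrt{\det\bigl(\langle \partial_i\Phi,\partial_j\Phi\rangle\bigr)_{1\le i,j\le d}}}{\int_{B_r(0)\cap M} g\,\Omega_M}\;\mathbf{1}_{\pi(B_r(0)\cap M)}(w).
\]
I now combine three ingredients: (a) the Jacobian factor equals $1+O(r)$ by part (3) of the tangent space approximations lemma; (b) local Lipschitz continuity of $g$ together with $g(0)>0$ yields $g(\Phi(w))=g(0)\bigl(1+O(r)\bigr)$ uniformly in $w\in B_r(0)\cap T_pM$; and (c) part (2) of the same lemma controls the symmetric difference $\bigl(B_r(0)\cap T_pM\bigr)\triangle \pi(B_r(0)\cap M)$, whose contribution is small relative to $\lambda(B_r(0)\cap T_pM)\asymp r^d$. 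Together these show that on a common ``core" set $K_r\subseteq \bigl(B_r(0)\cap T_pM\bigr)\cap \pi(B_r(0)\cap M)$ we have $f_{W(r)}(w)=u_r(w)\bigl(1+O(r)\bigr)$, while the complement of $K_r$ carries mass $O(r)$ under each density. Integrating yields $d_{TV}\bigl(f_{W(r)},u_r\bigr)\le Cr$ for small $r$.

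With this TV bound in hand, the classical maximal coupling construction (Lindvall) produces a pair $A(r)=(W(r),D(r))$ on a common probability space with $D(r)$ uniform on $B_r(0)\cap T_pM$ and $\PP\{W(r)\neq D(r)\}=d_{TV}\bigl(f_{W(r)},u_r\bigr)\le Cr$, proving (1). For (2), define $Z:=\widehat{D(r)}$, which is uniform on $S^{d-1}$ irrespective of the radial distribution of $D(r)$; on the event $\{W(r)=D(r)\}$ one has $\widehat{W(r)}=Z$ automatically (modifying $Z$ independently on the null event $\{D(r)=0\}$), so
\[
\PP\bigl\{\widehat{W(r)}\neq Z\bigr\}\le \PP\{W(r)\neq D(r)\}\le Cr.
\]

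The main obstacle I anticipate is the bookkeeping underlying the TV estimate: specifically, turning the absolute Lebesgue measure bound from part (2) of the tangent space lemma into a \emph{relative} bound $O(r)$ against $\lambda(B_r(0)\cap T_pM)\asymp r^d$. This requires quantifying that $\partial\pi(B_r(0)\cap M)$ lies in a thin annulus of radial width $O(r^2)$ around the sphere of radius $r$ in $T_pM$ (since $\|X-\pi X\|=O(\|\pi X\|^2)$), which yields an $O(r^{d+1})$ symmetric difference and hence the desired $O(r)$ relative error.
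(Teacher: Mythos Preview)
Your proposal is correct and follows essentially the same route as the paper: bound the $L^1$ distance between the density $h_r$ of $W(r)$ and the uniform density $1/\lambda(B_r)$ using the three ingredients (Jacobian $=1+O(r)$, Lipschitz $g$, volume comparison), then invoke the TV--coupling characterization, and finally normalize for part~(2). Your explicit remark about the ``obstacle'' is apt: the paper's Lemma~\ref{lem: basicGeom}(2) as stated gives only an absolute $O(r)$ bound on the volume difference, which would be too weak, but its \emph{proof} (thin annulus of width $O(r^2)$) actually yields the $O(r^{d+1})$ bound you need---you have identified and patched a point the paper handles somewhat loosely.
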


The previous Lemma leads to the asymptotic distribution of the statistic $U_{k,n}$. 
\begin{theorem}[\bf Local Limit Theorem for angle-variance] \label{thm:main} Let $k:=\lceil C \log(n)
\rceil$ for the constant $C$ of the proof of Proposition \ref{thm: Chernoff-Okamoto} 
and assume $X_1,\dots, X_k$ are the $k$ nearest neighbors to $p=0$ in the sample, with respect
to the euclidean distance in $\RR^m$. If $\dim T_pM=d$ then the following statements hold:
\begin{enumerate}
\item The equality $\lim_{n\rightarrow \infty} \EE[U_{k,n}]=\beta_d$ holds and
\item The quantity $k\left(U_{k,n}-\beta_d\right)$ converges, in distribution, to that of 
$\>\sum_{i=1}^\infty\lambda_i(\chi_{1,i}^2-1)$ where the $\chi_{1,i}^2$ are i.i.d. chi-squared random 
variables with 
one degree of freedom and the $\lambda_i$ are the eigenvalues of the operator
$\calA$ on $L^2(S^{d-1})$ defined by
\[
({\calA}u)(x)=\int_{S^{d-1}}\left(h(x,z)-\beta_d\right)
u(z)\dds\mu(z)
\]
for $u\in L^2(S^{d-1})$, where $h(v,v'):=\left(\arccos(v\cdot v')-\frac{\pi}{2}\right)^2$ and $\mu$ denotes the uniform measure on $S^{d-1}$. 
\end{enumerate}
\end{theorem}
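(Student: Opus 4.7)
The plan is to reduce the asymptotic analysis of $U_{k,n}$ to the classical limit theorem for a completely degenerate second-order U-statistic built from i.i.d.\ uniform samples on $S^{d-1}$, using the preceding results as successive approximation tools. By the approximation Proposition, it suffices to analyze $V_{k,n}$ since $k(U_{k,n}-V_{k,n})\to 0$ in probability and $\EE[U_{k,n}-V_{k,n}]\to 0$. Conditioning on $R(n)$, the $k$ nearest neighbors are i.i.d.\ from $g_n$ (by the nearest-neighbor Proposition), so applying the Coupling Lemma independently to each of them produces, on an event $G_n$, a joint realization of the unit projections $\widehat{W_1},\dots,\widehat{W_k}$ together with genuine i.i.d.\ uniforms $Z_1,\dots,Z_k$ on $S^{d-1}$ for which $\widehat{W_i}=Z_i$ for every $i$. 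A union bound gives
\[
\PP(G_n^c)\ \le\ C\,k\,r(n)\ =\ O\!\left(\log n\cdot(\log n/n)^{1/d}\right)\ =\ o(1).
\]
On $G_n$ one has exactly $V_{k,n}=\tilde U_{k,n}$, where $\tilde U_{k,n}:=\binom{k}{2}^{-1}\sum_{i<j}h(Z_i,Z_j)$ is the sphere U-statistic with the kernel $h$ of (\ref{kernel}).

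For part (1), boundedness $0\le h\le \pi^2/4$ together with $\EE[\tilde U_{k,n}]=\EE[h(Z_1,Z_2)]=\beta_d$ (Lemma on angle moments) gives
\[
\bigl|\EE[V_{k,n}]-\beta_d\bigr|\ \le\ \tfrac{\pi^2}{4}\PP(G_n^c)\ \to\ 0,
\]
and combining with $\EE[U_{k,n}-V_{k,n}]\to 0$ yields $\EE[U_{k,n}]\to\beta_d$.

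For part (2), the decisive observation is that the centered kernel $\tilde h(x,z):=h(x,z)-\beta_d$ is \emph{completely degenerate}: by rotation invariance of the uniform measure $\mu$ on $S^{d-1}$, the one-coordinate projection $z\mapsto\int h(x,z)\dds\mu(x)$ does not depend on $z$ and equals $\beta_d$, so the first Hoeffding component of $\tilde h$ vanishes. Since $\tilde h$ is a bounded symmetric measurable kernel on the compact space $S^{d-1}\times S^{d-1}$, the operator $\calA$ is self-adjoint Hilbert--Schmidt on $L^2(S^{d-1},\mu)$, with real eigenvalues $\{\lambda_i\}_{i\ge1}$ satisfying $\sum_i\lambda_i^2<\infty$. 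The classical limit theorem for completely degenerate U-statistics of order two then yields, as $k\to\infty$,
\[
k\bigl(\tilde U_{k,n}-\beta_d\bigr)\ \xrightarrow{d}\ \sum_{i=1}^{\infty}\lambda_i\bigl(\chi^2_{1,i}-1\bigr),
\]
with the series converging almost surely thanks to the Hilbert--Schmidt property. Slutsky's theorem, combined with $\PP(G_n)\to 1$, $V_{k,n}=\tilde U_{k,n}$ on $G_n$, and $k(U_{k,n}-V_{k,n})\xrightarrow{P}0$, transfers this limit from $\tilde U_{k,n}$ to $U_{k,n}$.

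The main obstacle is that the vectors driving $U_{k,n}$ are neither i.i.d.\ uniform on $S^{d-1}$ nor even drawn from a fixed distribution---they are the $k$ nearest neighbors in a Euclidean sample of size $n$ from a curved manifold with a Lipschitz, but not uniform, density. The combined use of the tangent-space approximation Proposition (to remove curvature at scale $r(n)^2$) and the Coupling Lemma (to replace the near-uniform projections by true uniforms at cost $O(r(n))$ per point) is the right device, but one must check that the cumulative coupling error does not wash out the fine $1/k$-scale on which the limit lives; the bound $k\,r(n)\to 0$, together with the uniform boundedness of $h$, is what makes this control possible and justifies the reduction to the textbook degenerate-U-statistic setting.
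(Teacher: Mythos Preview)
Your proposal is correct and follows essentially the same route as the paper: reduce $U_{k,n}$ to $V_{k,n}$ via Proposition~\ref{thm: mainPart1}, couple $V_{k,n}$ to the sphere $U$-statistic $E_n$ (your $\tilde U_{k,n}$) via Lemma~\ref{lem: coupling} with a union bound of cost $O(k\,r(n))\to 0$, then invoke the classical limit theorem for degenerate $U$-statistics. Your verification of degeneracy through rotation invariance of $\mu$ (showing the first Hoeffding projection is identically $\beta_d$) is slightly cleaner than, but equivalent to, the paper's covariance identity~(\ref{dege}).
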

 This limit theorem is obtained by the various approximation steps given in the preliminary results
 together with the classical Central Limit Theorem for degenerate $U$ statistics, as described
 in Chapter 5 of \cite{serfling}. Depending on the relative values of the $\lambda_i$'s appearing in the statement of the Theorem, it could happen that the limiting distribution just obtained approaches a Gaussian distribution as the dimension increases
(this would happen if the $\lambda_i$ were such that Lindeberg's condition holds).

Although theoretical study of the $\lambda_i$'s is left for future work, we conjecture that as $d$ increases the limiting distribution converges to a Gaussian distribution. Numerical experiments seem to support our conjecture, see Figure~\ref{fig:qqplot2}.  

\begin{figure}[h]
\centering
\begin{subfigure}[b]{0.33\textwidth}
\includegraphics[width=\textwidth]{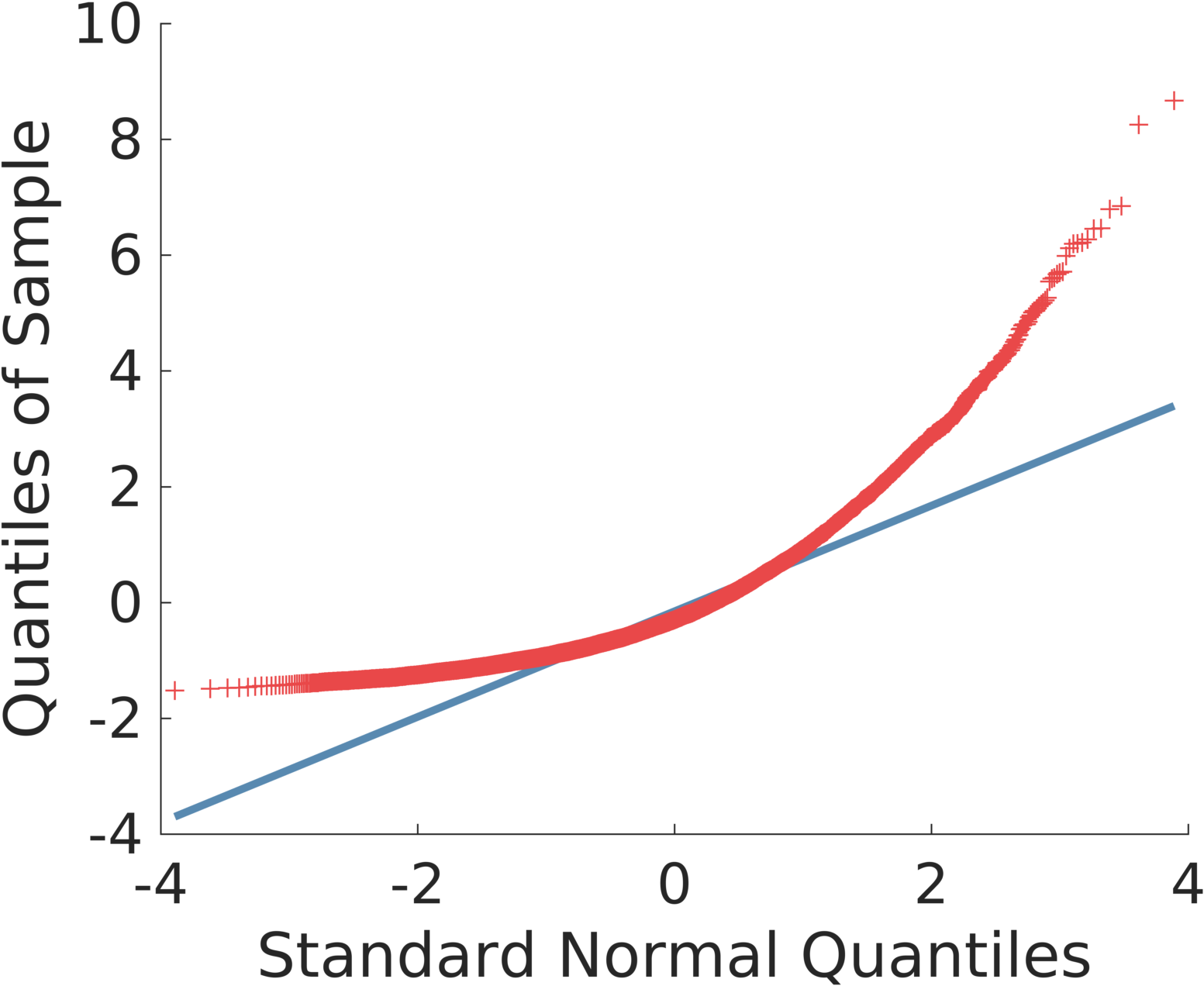}
\caption{$d = 2$}{}
\end{subfigure}
    ~ %add desired spacing between images, e. g. ~, \quad, \qquad, \hfill etc. 
      %(or a blank line to force the subfigure onto a new line)
      \begin{subfigure}[b]{0.33\textwidth}
      \includegraphics[width=\textwidth]{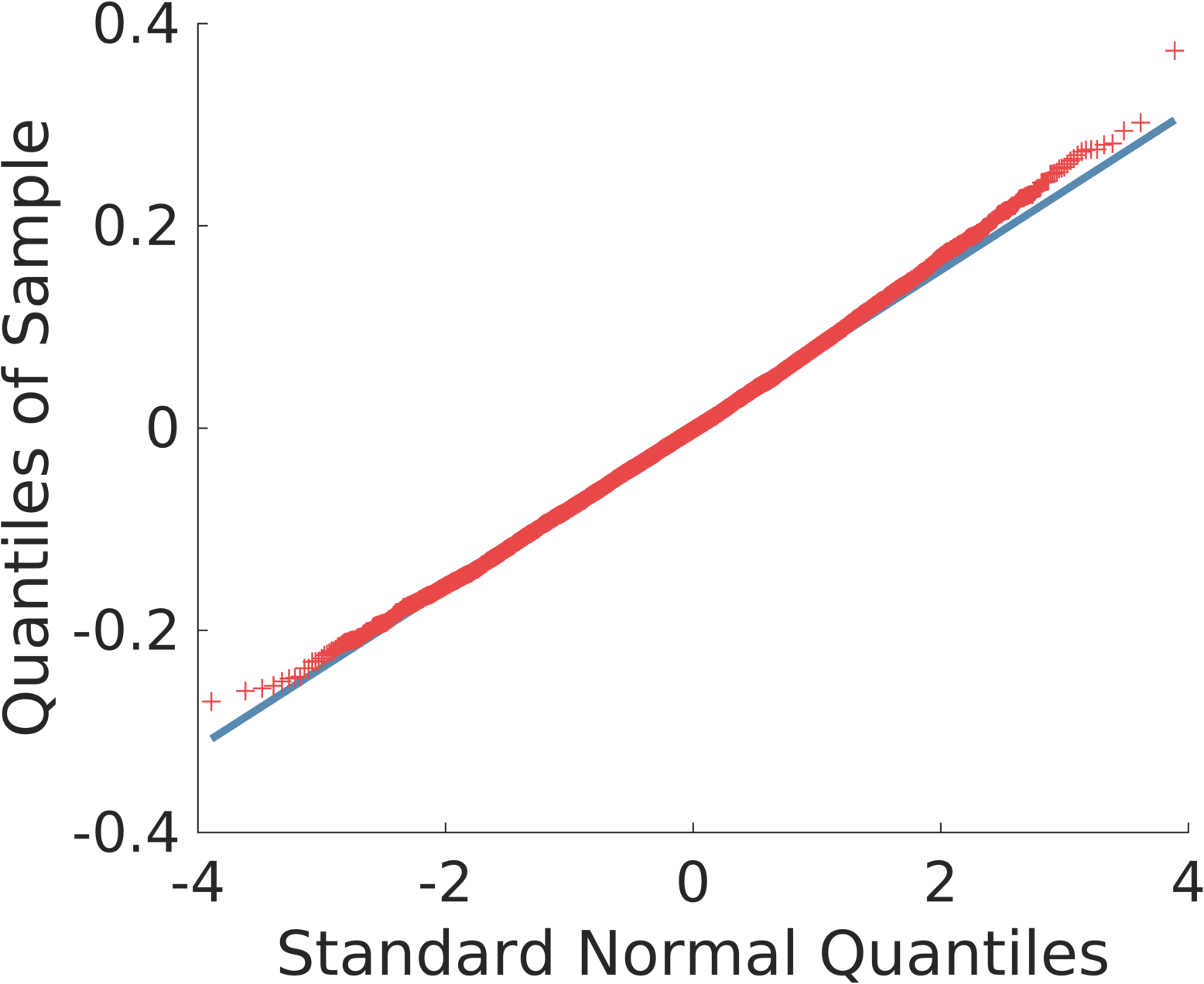}
      \caption{$d=25$}
      \end{subfigure}
      ~
      \begin{subfigure}[b]{0.33\textwidth}
      \includegraphics[width=\textwidth]{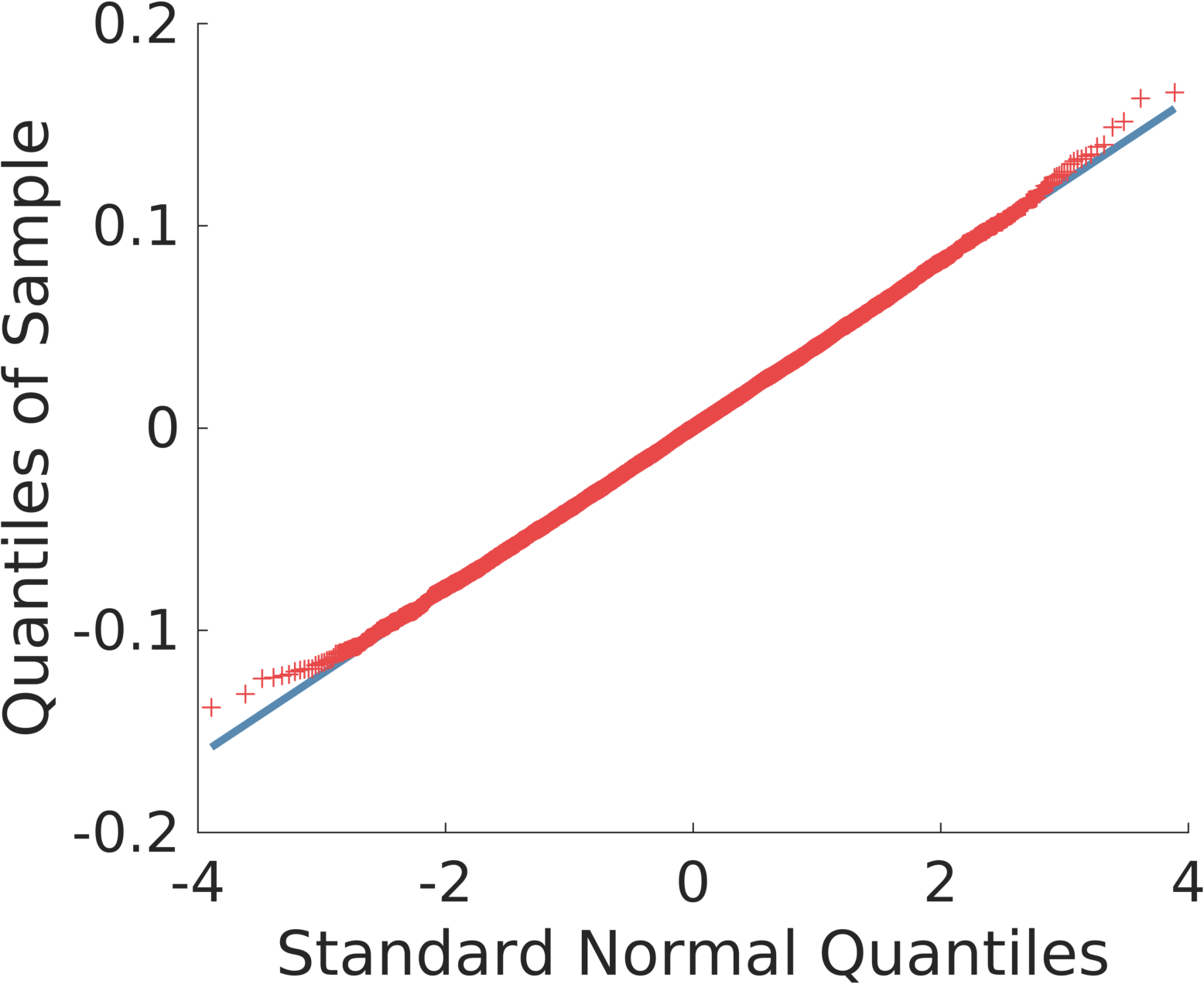}
      \caption{$d=50$}
      \end{subfigure}
      \caption{QQ-plots. As established in the proof of Theorem \ref{thm:main}, the limiting distribution is in fact the asymptotic distribution of $k(E_n-\beta_d)$, with $E_n$ defined in (Equation~\ref{E_n}). For this figure, we generate $10000$ samples of the variable $k(E_n-\beta_d)$, for $k=10d$ in dimensions $d=5, 25$, and $50$. The plots compare the quantiles of the sample distribution against the standard normal quantiles.}\label{fig:qqplot2}
      \end{figure}

In order to get a consistency result for our basic local dimension estimator, we will use the
following fact.

\begin{cor}[\bf Variance convergence]\label{cor: var_equal}
Under the conditions stated at the beginning of this subsection, 
\[
\lim_{n\to\infty}{k \choose 2}\var\, U_{k,n}=\sigma_d^2
\]
for $d$ equal to the dimension of $M$ near 0.
\end{cor}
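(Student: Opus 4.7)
My approach is to exploit the fact that, conditionally on the nearest-neighbor radius $R(n)$, the statistic $U_{k,n}$ is a genuine order-$2$ U-statistic with i.i.d.\ inputs drawn from the density $g_n$ (Proposition \ref{thm: Chernoff-Okamoto}(2)), so the classical Hoeffding variance formula applies directly. Writing $h_{1,n}(x):=\EE_{X'\sim g_n}[h(\hat x,\hat X')]$ and setting
\[
\zeta_1(n):=\var_{g_n}\bigl(h_{1,n}(X_1)\bigr),\qquad \zeta_2(n):=\var_{g_n\otimes g_n}\bigl(h(\hat X_1,\hat X_2)\bigr),
\]
this formula gives $\binom{k}{2}\var(U_{k,n}\mid R(n))=2(k-2)\zeta_1(n)+\zeta_2(n)$. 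The goal is therefore to control the two Hoeffding variances and then deconditionalize.

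\textbf{Identifying the limits.} Under the uniform measure on $S^{d-1}$, rotational symmetry implies that the analogue $h_1^\infty(z):=\EE_{Z'}[h(z,Z')]$ is \emph{constant}, equal to $\beta_d$ by Lemma \ref{lem: anglePairs}(3), so its variance $\zeta_1^\infty$ vanishes. The analogue $\zeta_2^\infty$ equals $\sigma_d^2$ by definition and Lemma \ref{lem: anglePairs}(4). I would then use Lemma \ref{lem: coupling} to construct couplings between $\hat W_i$ and uniform $Z_i\in S^{d-1}$ with $\PP(\hat W_i\neq Z_i)=O(R(n))$, combined with the bound $|\langle\hat X_1,\hat X_2\rangle-\langle\hat W_1,\hat W_2\rangle|=O(R(n))$ from Lemma \ref{lem: basicIneqs}(3), to transfer these limits to $g_n$. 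Since the kernel $h$ is bounded above by $\pi^2/4$, these couplings yield quantitative bounds $|\zeta_2(n)-\sigma_d^2|=O(R(n))$ and $\zeta_1(n)=O(R(n))$.

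\textbf{Deconditionalization.} From Proposition \ref{thm: Chernoff-Okamoto}(1), $R(n)=O((\log n/n)^{1/d})$ almost surely and $k=O(\log n)$, so $k\zeta_1(n)\to 0$ almost surely and $\binom{k}{2}\var(U_{k,n}\mid R(n))\to\sigma_d^2$ almost surely. Applying the law of total variance,
\[
\binom{k}{2}\var U_{k,n} = \binom{k}{2}\EE\bigl[\var(U_{k,n}\mid R(n))\bigr] + \binom{k}{2}\var\bigl(\EE[U_{k,n}\mid R(n)]\bigr),
\]
the first summand converges to $\sigma_d^2$ by dominated convergence (the integrand is bounded since $0\leq U_{k,n}\leq \pi^2/4$), while the second summand is $O\!\bigl(\binom{k}{2}\EE R(n)^2\bigr)=O((\log n)^{2+2/d}n^{-2/d})\to 0$, since the same coupling gives $|\EE[U_{k,n}\mid R(n)]-\beta_d|=O(R(n))$.

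\textbf{Main obstacle.} The technical heart is the bound $\zeta_1(n)=O(R(n))$: although $\zeta_1^\infty=0$ holds by symmetry, $g_n$ is only approximately rotationally symmetric, and the function $t\mapsto(\arccos t-\pi/2)^2$ fails to be globally Lipschitz because its derivative blows up at $t=\pm 1$. The cleanest way I see to handle this is to compare $h_{1,n}(x)$ to a rotationally symmetric proxy $\EE_Z[h(\hat x,Z)]$ using the coupling of Lemma \ref{lem: coupling}, decompose $\hat x=\alpha\hat w_x+\beta v$ with $v\perp T_pM$ so that $\alpha=1-O(R(n)^2)$ by Lemma \ref{lem: basicIneqs}(2), and then absorb the endpoint singularity by integrating against the bounded density of $\langle \hat w_x,Z\rangle$ rather than working pointwise.
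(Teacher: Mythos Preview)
Your approach is sound but takes a genuinely different route from the paper. The paper does not condition on $R(n)$ or invoke Hoeffding's decomposition at all; instead it observes that $\binom{k}{2}\var E_n=\sigma_d^2$ holds \emph{exactly} for the ideal statistic $E_n$ (equation~(\ref{var_asymp}), a consequence of degeneracy already recorded in (\ref{dege})), and then shows $k^2(\EE U_{k,n}^2-\EE E_n^2)\to 0$ by chaining the deterministic bound $|U_{k,n}-V_{k,n}|=O(\sqrt{r(n)})$ of (\ref{UV_bound}) with the coupling estimate $\PP(V_{k,n}\neq E_n)=O(k\,r(n))$ from the proof of Theorem~\ref{thm:main}, using only boundedness of $h$. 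Your route is more structural---it makes explicit that the $\binom{k}{2}$ scaling is correct precisely because the first-order Hoeffding variance $\zeta_1(n)$ vanishes in the limit---whereas the paper treats this as a black box via (\ref{var_asymp}). The paper's argument is shorter because it simply recycles the two estimates already established for Theorem~\ref{thm:main} at the level of second moments.

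One step in your deconditionalization needs repair. You invoke dominated convergence for $\EE\bigl[\binom{k}{2}\var(U_{k,n}\mid R(n))\bigr]$ on the grounds that $U_{k,n}$ is bounded, but that only bounds $\var(U_{k,n}\mid R(n))\le\pi^4/16$; after multiplication by $\binom{k}{2}\to\infty$ the integrand is \emph{not} uniformly dominated, since on the event $\{R(n)>r(n)\}$ you have no smallness for $\zeta_1(n)$ and $2(k-2)\zeta_1(n)$ may be of order $k$. The fix is to split the expectation over $\{R(n)\le r(n)\}$ and its complement: on the good set your estimates apply, while the bad set has probability $O(n^{-C/8})$ by the Chernoff--Okamoto bound in the proof of Proposition~\ref{thm: Chernoff-Okamoto} and the integrand there is crudely $O(k^2)$, so its contribution is $o(1)$. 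A smaller point: because of the $\sqrt{\cdot}$ loss in Lemma~\ref{lem: basicArccos} when passing from $\widehat{X_i}$ to $\widehat{W_i}$, the rates you state as $O(R(n))$ for $\zeta_1(n)$, $|\zeta_2(n)-\sigma_d^2|$ and $|\EE[U_{k,n}\mid R(n)]-\beta_d|$ are in general only $O(\sqrt{R(n)})$ unless you carry out the integration argument sketched in your last paragraph; either rate suffices, since $k\sqrt{r(n)}\to 0$.
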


Recall from Section~\ref{sec:2} that our basic procedure estimates the dimension $d$ as $\widehat d$, equal to the integer $r$ such that $\beta_{r}$ is closest to $U_{k,n}$. This procedure is consistent, as stated
next.

\begin{proposition}[\bf Consistency of basic dimension estimator]\label{prop: consist}
As in Section~\ref{sec:2}, write $\widehat{d}$ for the basic estimator described above. Let $d$ be the true
dimension of $M$ in a neighborhood of $p=0$. Then, in the setting of the present
section,
 $$\PP(\mbox{failure})=\PP(\widehat{d}\neq d)\rightarrow 0,$$ as $n\rightarrow \infty$.

\end{proposition}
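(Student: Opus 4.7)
The plan is to leverage Theorem~\ref{thm:main} to obtain $U_{k,n}\to\beta_d$ in probability and then exploit the strict decrease of the sequence $(\beta_r)_{r\geq 1}$ to show that being close to $\beta_d$ already forces the integer minimizer $\widehat d$ to equal $d$.

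First I would verify that $(\beta_r)_{r\geq 1}$ is strictly decreasing. The bounds $\beta_r\leq 1/(r-1)$ and $\beta_{r+1}\geq 1/(r+1)$ from Claim~\ref{beta_sigma_bounds} are obtained by comparing sums of positive monotone terms to integrals, which give strict inequalities; hence $\beta_{r+1}<1/r<\beta_r$. Setting
\[
\delta_d:=\tfrac{1}{2}\min_{r\neq d}|\beta_r-\beta_d|,
\]
strict monotonicity implies that the minimum is attained at $r=d\pm 1$ (or only at $r=d+1$ if $d$ is the smallest admissible dimension), so $\delta_d>0$. Moreover, for every integer $r\neq d$ one has $|\beta_r-\beta_d|\geq 2\delta_d$, and therefore
\[
\bigl\{|U_{k,n}-\beta_d|<\delta_d\bigr\}\;\subseteq\;\{\widehat d=d\}.
\]

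It then suffices to show $\PP(|U_{k,n}-\beta_d|\geq \delta_d)\to 0$. By Theorem~\ref{thm:main}(1), $\EE[U_{k,n}]\to\beta_d$, so for $n$ large enough $|\EE[U_{k,n}]-\beta_d|<\delta_d/2$. By Corollary~\ref{cor: var_equal}, $\binom{k}{2}\var U_{k,n}\to\sigma_d^2$, whence $\var U_{k,n}=O(1/k^2)\to 0$ since $k=\lceil C\log n\rceil\to\infty$. Chebyshev's inequality then yields
\[
\PP\bigl(|U_{k,n}-\EE U_{k,n}|\geq \delta_d/2\bigr)\;\leq\;\frac{4\var U_{k,n}}{\delta_d^2}\;\longrightarrow\;0,
\]
and combining this with the bias bound via the triangle inequality delivers the desired $\PP(\widehat d\neq d)\to 0$.

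The argument is essentially a routine consistency-from-convergence-in-probability deduction and not a serious obstacle; the only point that warrants attention is upgrading the non-strict inequalities of Claim~\ref{beta_sigma_bounds} to the strict monotonicity of $(\beta_r)$ that underlies positivity of the gap $\delta_d$, which follows because the integral-to-series comparisons used there are in fact strict whenever the summand is strictly positive.
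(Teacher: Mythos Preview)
Your argument is correct and follows the same skeleton as the paper's proof: establish a positive gap between $\beta_d$ and its neighbors, split $|U_{k,n}-\beta_d|$ into bias plus fluctuation, use Theorem~\ref{thm:main}(1) for the bias, and control the fluctuation. The only substantive difference is the concentration tool: you invoke Chebyshev via Corollary~\ref{cor: var_equal}, whereas the paper applies a Bernstein inequality for $U$-statistics to $|U_{k,n}-\EE U_{k,n}|$, together with an explicit lower bound $(\beta_d-\beta_{d+1})/2\geq 1/(d-1)^2$ on the gap. Your route is more elementary and entirely sufficient for the qualitative consistency statement; the paper's Bernstein bound buys an explicit exponential tail of the form $2\exp\!\bigl(-k/(47(d-1)^2)\bigr)$, which they then use (immediately after the proof) to quantify how $k$ must scale with $d$ for a prescribed error level.
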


\subsection{Proofs}{$\phantom 3_{\phantom 3}$}

\vskip 1pc
{\bf Proof of Proposition \ref{thm: Chernoff-Okamoto}}.
\neno Let us recall a probability bound for the Binomial distribution. For $N$ an integer
valued random variable with Binomial($n,p$) distribution and expected value
$\lambda=np$, one of the Chernoff-Okamoto inequalities (see Section 1 in \cite{Janson}) states
that, for $t>0$, $\PP(N\leq \lambda-t)\leq\exp(-t^2/2\lambda)$. Letting $t=\lambda/2$, we get
\beq\label{ch-ok1}
\PP(N\leq \lambda/2)\leq \exp(-\lambda/8). \enq
For fixed and small enough $r>0$, let $B_r(0)$ denote the ball of radius $r$ around 
$0\in \real^m$. For a random vector $X$, with
the distribution $P$ of our sample $X_1,X_2,\dots,X_n$, by our assumptions on $P$ and
$M$ near 0,
we have that 
\[
\PP(X\in B_r(0)\cap M)\leq\alpha\nu_d r^d,
\]
where $\nu_d$ is the volume (Lebesgue measure) of the unit ball in $\real^d$ and $\alpha$
is a positive number. 
Let $N=N_r$ denote the amount of sample points that fall in $B_r(0)\cap M$.
We have that $\lambda=\EE(N)\leq\alpha\nu_d r^d n$. We choose $r$ such that
$\lambda\leq C\ln n$, for a constant $C$ to be specified in a moment. Then,
by (\ref{ch-ok1}), we get
\beq\label{borel-c}
\PP\left(N\leq \frac{C}{2}\ln n\right)\leq\exp(-C\ln n/8)=\left(\frac{1}{n}\right)^{C/8}
\enq
Pick any value of $C>8$. For this choice, the bound in (\ref{borel-c}) will add
to a finite value when summed over $n$. By the Borel-Cantelli Lemma, the
inequality  $N> \frac{C}{2}\ln n$ will hold for all $n$ sufficiently large.
It follows that if $k=\frac{C}{2}\ln n$, the $k$-nearest-neighbors of 0 in the
sample, will fall in $B_r(0)$ for every $n$ sufficiently large and
the chosen value of $r$, namely
\[
r=r(n)=\left(\frac{C\ln n}{\alpha\nu_d n}\right)^{1/d}
\]
which is $\og((\ln n/n)^{1/d}))$. The proof of the first part of the
Proposition ends by renaming $C$.
\neno The statement of the second part of Proposition \ref{thm: Chernoff-Okamoto} is intuitive 
and has been used in the literature without proof. Luckily, Kaufmann and Reiss \cite{kr} 
provide a formal proof of these type of results in a very general setting. In particular, (ii)
of Proposition \ref{thm: Chernoff-Okamoto} holds by formula (6) of \cite{kr}.
\hfill$\Box$
\vskip 1pc
\noindent {\bf Proof of Lemma \ref{lem: basicIneqs}} \neno
By an orthogonal change of coordinates, we can assume that $T_pM$ is spanned by the first $d$ 
basis vectors in $\RR^m$. The projection $\pi: M\rightarrow T_pM$ is a 
differentiable function whose derivative at $p=0$ is the identity. 
By the implicit function theorem we can
conclude that there exists an $r>0$, such that 
$\pi: B_{r}(0)\cap M\rightarrow \pi(B_{r}(0)\cap M)$ is a 
diffeomorphism and that $M$ admits, near $p$ a chart
$\Phi:B_{r}(0)\cap T_pM \rightarrow M$ of the form
\beq\label{Phi_formula}
\Phi(z_1,\dots, z_d)=(z_1,\dots, z_d, F_1(z_1,\dots, z_d),\dots, 
F_{t}(z_1,\dots, z_d))
\enq
where $m=d+t$, $\Phi(0)=p=0$ and such that $\frac{\partial F_i}{\partial z_j}(0)=0$ for 
$1\leq i \leq t$ and $1\leq j\leq d$.
As a result, the euclidean distance between a point of $M$ near $0$ and the tangent space at 
$0$ is given, in the local coordinates $z$, by 
\[d(z_1,\dots, z_d)=\sqrt{\sum_{i=1}^t F_i^2(z)}.\]
We will prove that there exists a constant $K$ such that, for all sufficiently small $\delta 
>0$ and all $z$ with $\|z\|\leq \delta$ the inequality
$d(z)\leq K \|z\|^2$ holds.
By Applying Taylor's Theorem at $0$ to the differentiable function $d(z)$ we conclude, since 
$\Phi(0)=0$ and $\frac{\partial F_i}{\partial z_j}(0)=0$, that the constant and linear term 
vanish from the expansion. This proves the claim because $\|z\|^2=\|\pi(\Phi(z))\|^2$. 
Assume $K$ is a constant which satisfies  
$\|X-\pi X\| \leq K \|\pi X\|^2$. Thus 
\begin{equation} \label{eq:dist1} \left\|\frac{X}{\|\pi X\|} - \frac{\pi X}{\|\pi X\|}\right\| \leq K \|\pi X\|.\end{equation}
On the other side, 
\begin{align}\label{eq:dist2}\begin{split}
\left\|\frac{X}{\|X\|} -\frac{X}{\|\pi X\|}\right\| &= \|X\|\left|\dfrac{1}{\|X\|} - \dfrac{1}{\|\pi X\|} \right| \\ 
& = \dfrac{1}{\|\pi X\|}\left|\|X\| - \|\pi X\|\right| \leq \dfrac{1}{\|\pi X\|}\|X - \pi X\| \leq K \| \pi X \|.
\end{split}
\end{align}	
Altogether, 
\[
\left\|\frac{X}{\|X\|} - \frac{\pi X}{\|\pi X\|}\right\| \leq \left\|\frac{X}{\|X\|}-\frac{X}{\|\pi X\|}\right\| + \left\|\frac{X}{\|\pi X\|} - \frac{\pi X}{\|\pi X\|} \right\| \leq 2 K \|\pi X\|,
\]
where the first inequality is just the triangle inequality and the second one follows from \eqref{eq:dist1} and \eqref{eq:dist2}. 
The third item in the Lemma follows immediately from the triangle inequality and the second item by adding and subtracting $\langle \widehat{X_i}, \widehat W_j\rangle $.
\hfill$\Box$
\vskip 1pc
\noindent 
\begin{remark}
The quadratic term of $G$ is the second fundamental form of $M$ and, therefore, the 
constant $K$ can be chosen to be the largest sectional curvature of $M$ at $p$. 
\end{remark}

Before proving Proposition \ref{thm: mainPart1}, we need a Lemma on the behavior of
the $\arccos$ function.
\begin{lemma}\label{lem: basicArccos} Suppose that $-1\leq c_1\leq c_2\leq 1$ and let 
$\delta=c_2-c_1$ be sufficiently small (for our purposes it suffices to have
$\delta\leq 1/4$). Then, 
\[
|\arccos(c_2)-\arccos(c_1)|\leq 2\sqrt{|c_2-c_1|}
\]
\end{lemma}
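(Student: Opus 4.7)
The plan is to reduce the inequality to a one-line estimate for $\arcsin$ via a standard sum-to-product identity, using the hypothesis $\delta \le 1/4$ only at the end to pin down the constant $2$. To begin, I would set $\theta = \arccos(c_2)$ and $\psi = \arccos(c_1) - \arccos(c_2)$; since $\arccos$ is decreasing, $\psi \ge 0$, and both $\theta$ and $\theta + \psi$ lie in $[0,\pi]$. The quantity to be bounded is exactly $\psi$.

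Applying the identity $\cos\theta - \cos(\theta + \psi) = 2 \sin(\theta + \psi/2) \sin(\psi/2)$ gives
\[
c_2 - c_1 \;=\; 2 \sin(\theta + \psi/2)\,\sin(\psi/2).
\]
Because $\theta \ge 0$ and $\theta + \psi \le \pi$, the point $\theta + \psi/2$ lies in the subinterval $[\psi/2,\pi-\psi/2]$ of $[0,\pi]$, and the concavity of $\sin$ on $[0,\pi]$ together with the symmetry $\sin(\psi/2) = \sin(\pi - \psi/2)$ shows that $\sin$ attains its minimum on that subinterval at the endpoints. Hence $\sin(\theta + \psi/2) \ge \sin(\psi/2)$, which yields the key inequality $c_2 - c_1 \ge 2\sin^2(\psi/2)$, i.e.\ $\sin(\psi/2) \le \sqrt{\delta/2}$.

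It remains to invert. The hypothesis $\delta \le 1/4$ gives $\sin(\psi/2) \le 1/(2\sqrt{2}) \le 1/\sqrt{2}$, so a fortiori $\psi/2 \le \pi/4$. On $[0, 1/\sqrt{2}]$ one has $1/\sqrt{1-t^2} \le \sqrt{2}$, so integrating from $0$ to $y$ produces the elementary bound $\arcsin(y) \le \sqrt{2}\, y$ throughout that range. Applying this to $y = \sin(\psi/2)$ yields
\[
\psi/2 \;=\; \arcsin(\sin(\psi/2)) \;\le\; \sqrt{2}\,\sin(\psi/2) \;\le\; \sqrt{2}\cdot\sqrt{\delta/2} \;=\; \sqrt{\delta},
\]
hence $\psi \le 2\sqrt{\delta}$ as required. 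The main subtlety is the sharpness of the constant: the cruder convexity bound $\arcsin(y) \le (\pi/2) y$ would produce a constant of $\pi/\sqrt{2} > 2$, so one really does need to stay in a range where $\arcsin$ is not too superlinear; the hypothesis $\delta \le 1/4$ provides exactly enough room for the sharper bound $\arcsin(y) \le \sqrt{2}\,y$ to apply.
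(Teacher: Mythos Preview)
Your proof is correct and takes a genuinely different route from the paper. The paper proceeds by writing $|\arccos(c_2)-\arccos(c_1)|=\int_{c_1}^{c_2}\frac{dx}{\sqrt{1-x^2}}$ and then splitting into cases according to the signs of $c_1,c_2$: when both are nonnegative, monotonicity of the integrand lets one shift the interval to $[1-\delta,1]$ and integrate explicitly to get $2\sqrt{\delta}$; the both-negative case is symmetric; and the mixed-sign case uses that the derivative of $\arccos$ is bounded on $[-1/4,1/4]$. Your sum-to-product identity $c_2-c_1=2\sin(\theta+\psi/2)\sin(\psi/2)$ together with the concavity of $\sin$ on $[0,\pi]$ gives the inequality $\delta\ge 2\sin^2(\psi/2)$ in one stroke, eliminating the case split entirely. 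The hypothesis $\delta\le 1/4$ enters in different places: the paper needs it only for the mixed-sign case, while you use it to keep $\sin(\psi/2)$ in the range where $\arcsin(y)\le\sqrt{2}\,y$ holds. Your argument is cleaner; the paper's integral approach is perhaps more direct in spirit but pays for that with the case analysis. One small remark: the clause ``so a fortiori $\psi/2\le\pi/4$'' is not actually needed for what follows, since you already know $\psi/2\in[0,\pi/2]$ (which is what makes $\arcsin(\sin(\psi/2))=\psi/2$) and the bound $\arcsin(y)\le\sqrt{2}\,y$ is being applied to $y=\sin(\psi/2)\le 1/(2\sqrt{2})$, not to $\psi/2$ itself.
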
 
\begin{proof} Assume first that both $c_1$ and $c_2$ are positive. We have
\[
|\arccos(c_2)-\arccos(c_1)| = \int_{c_1}^{c_2} \frac{1}{\sqrt{1-x^2}}dx.
\]
Using that the integrand in the last expression is increasing in $[0,1]$ and by the change
of variables $u=1-x$, we get
\[
|\arccos(c_2)-\arccos(c_1)|\leq \int_{1-\delta}^{1} \frac{1}{\sqrt{1-x^2}}dx=
\int_{0}^{\delta} \frac{1}{\sqrt{u(2-u)}}du\leq \int_{0}^{\delta} \frac{1}{\sqrt{u}}du
\]
since $2-u\geq 1$ for $u\leq\delta$. From this last bound, the result
follows in this case by integration. The argument for the case in which both $c_1$ and $c_2$ 
are negative is identical, by symmetry. In the case $c_1\leq 0 \leq c_2$, both $c_1$ and $c_2$ 
fall in a fixed interval ($[-1/4,1/4]$) where the derivative of $\arccos$ is bounded and the result follows easily. 
\end{proof}

\noindent {\bf Proof of Proposition \ref{thm: mainPart1}}
\neno To prove part (1), putting together Proposition~\ref{thm: Chernoff-Okamoto} and 
Lemma~\ref{lem: basicIneqs} we have  
\[
\max_{i\leq k}\|\widehat{X_i} - \widehat{W_i}\|=\og(r(n))=\og\left(\left(\frac{\ln n}{n}
\right)^{1/d}\right)
\]
From this, it follows easily that 
\[
\max_{i< j\leq k}
|\langle\widehat{X_i},\widehat{X_j}\rangle - 
\langle\widehat{W_i},\widehat{W_j}\rangle|=\og(r(n)),
\]
and, using Lemma~\ref{lem: basicArccos} we get 
\[
\max_{i< j\leq k}
|\arccos\langle\widehat{X_i},\widehat{X_j}\rangle - 
\arccos\langle\widehat{W_i},\widehat{W_j}\rangle|=\og(\sqrt{r(n)})
\]
The bound is preserved by the application of the function $u\mapsto (u-\pi/2)^2$ (since the function is locally Lipschitz) and
by taking averages over all pairs, and we get
\beq\label{UV_bound}
U_{k,n}-V_{k,n}=\og(\sqrt{r(n)})=\og\left(\left(\frac{\ln n}{n}\right)^{1/2d}\right).
\enq
The result follows by observing that, for the value of $k$ considered,
\[
k\,\og\left(\left(\frac{\ln n}{n}\right)^{1/2d}\right)=\och(1).
\]
To prove $(2)$, notice that from part $(1)$ it is immediate that $|U_{k,n}-V_{k,n}|$ converges 
to zero in probability, which implies that $\lim_{n\rightarrow \infty}\EE\left(U_{k,n}-V_{k,n}
\right)=0$, since $U_{k,n}-V_{k,n}$ is a bounded random variable. 
\hfill$\Box$
\vskip 1pc
\noindent {\bf Proof of Lemma \ref{lem: basicGeom}}
\neno
Recall, from the proof of Lemma \ref{lem: basicIneqs}, that for $r>0$, small enough, the projection
$\pi: B_{r}(0)\cap M\rightarrow \pi(B_{r}(0)\cap M)$ is a 
diffeomorphism and that $M$ admits, near $p$ a chart (inverse)
$\Phi:B_{r}(0)\cap T_pM \rightarrow M$ of the form
given in (\ref{Phi_formula}) and satisfying that $\Phi(0)=p=0$ 
and such that $\frac{\partial F_i}{\partial z_j}(0)=0$ for 
$1\leq i \leq t$ and $1\leq j\leq d$.
Also from that proof, recall that there exists a constant $K$ such that for  
small enough $\delta >0$ and all $z$ with $\|z\|\leq \delta$, we have
$d(z)\leq K \|z\|^2$, where $d(z)$ is the distance between a point $z\in M$
and its projection on $T_pM$. It follows that the image $\pi(B_r(0)\cap M)$ contains a ball 
of radius $r'<r$ such that $r-r'=O(r^2)$ and therefore
\[B_r(0)\cap T_pM\supseteq \pi(B_r(0)\cap M))\supseteq   B_{r'}(0)\cap T_pM,\]
proving part $(2)$ of the Lemma, since the volume of the first and last term differ 
by at most $O(r^2)$. For part $(3)$ 
note that $\frac{\partial \Phi}{\partial z_i}=e_i + 
\sum_{t=1}^{m-d} \frac{\partial F_t }{\partial z_i} e_{d+t}$. Since $\frac{\partial F_t }{\partial z_i}$ 
are $O(r^2)$ the inner products $\langle \frac{\partial \Phi}{\partial z_i}, \frac{\partial \Phi}
{\partial z_j}\rangle $ are $1+O(r^2)$ if $i=j$ and $O(r^2)$ otherwise and we conclude that $
\sqrt{\langle \frac{\partial \Phi}{\partial z_i}, \frac{\partial \Phi}{\partial z_j}\rangle}$ is 
$1+O(r)$ as claimed.
\hfill$\Box$
\vskip 1pc
\noindent {\bf Proof of Lemma \ref{lem: coupling}}
\neno The total variation distance between two probability measures $\mu$ and $\nu$, 
defined as $\|\mu-\nu\|_{TV}:=\sup_{A}|\mu(A)-\nu(A)|$, satisfies
  \[\|\mu-\nu\|_{TV} =\inf\{\PP\{X\neq Y\}: A=(X,Y)\}\]
  where the infimum runs over all couplings $A=(X,Y)$ of random variables $X,Y$ with distributions 
  given by $\mu$ and $\nu$, respectively (see, for instance, page 22, Chapter 1 of \cite{villani}). Moreover, if $\mu$ and $\nu$ are given by densities $g_1,g_2$ 
  then the following inequality holds
  \[\|\mu-\nu\|_{TV}\leq \|g_1-g_2\|_{L^1}.\]
  
  We will prove the first part of the 
  Lemma by bounding the $L^1$-norm of the difference of the densities of $W(r)$ and 
  $D(r)$. Recall the definition of $W(r)$ right before the statement of Lemma \ref{lem: basicGeom}.
  The difference of the two densities  is given by
  \[
  \int_{B_r(0)\cap T_pM}\left| h_r-\frac{1}{\lambda(B_r)}\right|d\lambda
  \]
  where $h_r$ denotes the density of $W(r)$ with respect to the $d$-dimensional Lebesgue measure 
  $\lambda$ in $T_pM$ and $B_r:= B_r(0)\cap T_pM$. More precisely, defining $\Phi$ as in 
  Lemma~\ref{lem: basicGeom} the density of $W(r)$ is given by
  \[h_r(u)= g_r(\Phi(u))\sqrt{\det\left\langle \frac{\partial \Phi}{\partial x_i}, \frac{\partial \Phi}{\partial x_j}\right\rangle_{1\leq i,j\leq d}(u)}\]

  Since $g$ is locally Lipschitz continuous there exists a constant $K$ such that 
  \[ g(0)-K_1r\leq g(\Phi(u))\leq g(0)+K_1r.\]
  By Lemma~\ref{lem: basicGeom} there exist constants $K_2,K_3$ such that the following inequalities hold for $u\in B_r$:
  \[
  1-K_2r\leq  \sqrt{\det\left\langle \frac{\partial \Phi}{\partial x_i},  \frac{\partial \Phi}{\partial x_j}\right\rangle_{1\leq i,j\leq d}(u)}\leq 1+K_2r \text{ and}
  \]
  \[\lambda(B_r)-K_3r\leq  \lambda(\pi(B_r(0)\cap M))\leq \lambda(B_r)+K_3r\]
  Combining these inequalities we conclude that there exists a constant $\tilde{K}$ such that for all $u\in B_r$ 
  \[ \lambda(B_r)(g(0)-\tilde{K}r)\leq \int_{B_r(0)\cap M} g\Omega \leq \lambda(B_r)(g(0)+\tilde{K}r).\]
  As a result the inequality
  \[ \frac{1}{\lambda(B_r)}\left(\frac{g(0)-\tilde{K}r}{g(0)+\tilde{K}r}-1\right) \leq h_r-\frac{1}{\lambda(B_r)}\leq \frac{1}{\lambda(B_r)}\left(\frac{g(0)+\tilde{K}r}{g(0)-\tilde{K}r}-1\right)\]
  so, using the fact that $g(0)>0$ we conclude that there exists a constant such that
  \[\int_{B_r} \left|h_r-\frac{1}{\lambda(B_r)}\right|\leq Cr\] as claimed, finishing the
  proof of the first part of the Lemma. \neno
  For part (2), let $A'(r)$ be the random pair obtained from $A(r)=(W(r),D(r))$ by normalizing its 
  components, that is  $A'(r)=\left(\widehat{W(r)}, \widehat{D(r)}\right)$ and note that $\PP\left\{\frac{W(r)}{\|W(r)\|}\neq \frac{D(r)}{\|D(r)\|}\right\}=O(r)$, because this probability is bounded above by the probability that $W(r)$ and $D(r)$ differ. Note that the random vector $Z=\frac{D(r)}{\|D(r)\|}$ is uniform on the unit sphere, and in particular its distribution is independent of the value of $r$.
\hfill$\Box$
\vskip 1pc
\noindent For $k=\lceil C\log(n)\rceil$, as before,
 let $Z_1,\dots, Z_k$ be an i.i.d. sample distributed uniformly on the unit sphere $S^{d-1}$ of $T_pM$ and define
\beq\label{E_n}
E_n:=\frac{1}{\binom{k}{2}}\sum_{1\leq i<j\leq n} \left(\arccos\langle Z_i,Z_j\rangle-\frac{\pi}{2}\right)^2.\enq
In view of Proposition \ref{thm: mainPart1}, in order to prove Theorem \ref{thm:main} it 
will suffice to show that the limiting standardized distribution of the $V_{k,n}$ defined in that Proposition 
is the same as that of $E_n$, and establish the asymptotics for $E_n$.

\vskip 1pc
\noindent {\bf Proof of Theorem \ref{thm:main}}

By Proposition~\ref{thm: Chernoff-Okamoto} part $(1)$ on a set of probability 1 (on the set
of infinite samples of data), for some $n_0$, the inequality $R_n:= R(n) \leq r(n)$ holds for
$n\geq n_0$.

\noindent
By Proposition~\ref{thm: Chernoff-Okamoto} part $(2)$, for $i=1,\dots, k$ the distribution of $W_i=\pi(X_i)$, the projections on $T_pM$ of the $k$ nearest neighbors of $0$ in the sample, is that of
an independent sample $W_1(R_n),\dots, W_k(R_n)$, with the $W_j(R_n),\,j\leq k$ as defined before
Lemma \ref{lem: basicGeom}. 

From the previous Lemma, conditionally on $R_n$, we have a coupling 
$A_j'(R_n)=(\widehat{W_j(R_n)},Z_j)$ for each $j\leq k$. These couplings can be taken such that
the $\widehat{W_j(R_n)}, j\leq k$ form an i.i.d. sample and the same holds for the $Z_j$'s. By
the previous lemma, we have that for each $j$, $\PP(\widehat{W_j(R_n)}\neq Z_j)\leq C r(n)$.
Then, except for a set of measure 0, we get the following event inclusion
\[
\{kV_{k,n}\neq kE_n\}\subseteq \bigcup_{j=1}^k\{ \widehat{W_i(R_n)}\neq Z_i\}
\]
By the union bound, the probability of the rightmost event is bounded by $Ck(n)r(n)$ which
goes to zero, as $n$ goes to infinity by the choice of $k(n)$ and the value of
$r(n)$.  It follows that $V_{k,n}$ and $E_n$ have the same standardized asymptotic distributions, and being both random variables bounded, it follows that $\lim_{n\rightarrow \infty}\EE(V_{k,n}-E_n)=0$.

For part (2) of Theorem \ref{thm:main} it only remains to establish the limiting 
distribution of $E_n$. This statistic falls in the
framework of classical $U$-statistics, that have played an important role in the theory
of many non-parametric procedures. See for instance \cite{rw79} for several applications of
the theory of $U$-statistics. Even an empirical processes theory is available for $U$-processes,
see for instance \cite{ag93}, which has found application in the study of notions of multivariate
depth. Still, we will only require the classical theory, as exposed in Chapter 5 of \cite{serfling}
and Chapter 3 of \cite{rw79}.

Recall the definition of the kernel $h$ in Equation~\ref{kernel}. By the symmetry of the uniform distribution for three independent vectors,
$Z_1,Z_2,Z_3$ with uniform distribution on $S^{d-1}$, it can
be easily verified that, 
\beq\label{dege}
\esp\left(h(Z_1,Z_2)h(Z_1,Z_3)-\beta^2_d\right)=0.
\enq 

This means that the $U$-statistic associated to $h$ is degenerate. It follows (see the variance
calculation in \cite{rw79}) that
\beq\label{var_asymp}
{k\choose 2}\mbox{Var}(E_n)=\mbox{Var}(h(Z_1,Z_2))
\enq
and by the Theorem for degenerate $U$-statistics in Section 5.5 of \cite{serfling}, part (2)
of our Theorem follows.
\hfill$\Box$
\vskip 1pc
\noindent 

\vskip 1.5pc
\noindent {\bf Proof of Corollary \ref{cor: var_equal}}

\noindent By (\ref{var_asymp}), it suffices to show that 
$k^2\lim(\EE U_{k,n}^2-\EE E_n^2)=0$, for $E_n$ as in 
the proof of Theorem \ref{thm:main}. Applying equation (\ref{UV_bound}) and the
facts that the function $h$ is bounded and that $k=\mbox{O}(\ln n)$ on a set
of probability 1, we have
\[
k^2\EE U_{k,n}^2\leq k^2\EE V_{k,n}^2+\mbox{O}\left(\ln^2 n
\left(\frac{\ln n}{n}\right)^{1/2d}\right).
\]
The opposite inequality (interchanging the roles of $U_{k,n}$ and $V_{k,n}$) is obtained by
the same reasoning, and we get 
\beq\label{varlim1}
\lim k^2(\EE U_{k,n}^2-\EE V_{k,n}^2)=0.
\enq
By the coupling argument of the proof of Theorem \ref{thm:main}, we have that $V_{k,n}$ and
$E_n$ might differ at most on a set of measure $\mbox{O}(\ln (n)\, r(n))$. Taking expectations
on the sets were they coincide and differ, we obtain
\[
k^2\,\EE V_{k,n}^2\leq k^2\,\EE E_{n}^2+k^2\,\mbox{O}(\ln (n)\, r(n)).
\]
Observing that the second term in the right hand side of this
inequality goes to zero, as $n$ grows, and that the reverse
inequality is obtained similarly, we conclude
\beq\label{varlim2}
\lim k^2(\EE V_{k,n}^2-\EE E_{n}^2)=0,
\enq
and the result follows by combining (\ref{varlim1}) and (\ref{varlim2}).
\hfill$\Box$
\vskip 1pc
\noindent {\bf Proof of Proposition \ref{prop: consist}}.

\noindent Let $d$ be the true value of the dimension. 
$\beta_{d+1}$ is the expected value closest to $\beta_d$ and, by the proof of Claim 
\ref{beta_sigma_bounds}, $(\beta_d-\beta_{d+1})/2\geq 1/(d-1)^2$. For the basic
procedure to incur in error it is necessary that 
$|U_{k,n}-\beta_d|\geq |U_{k,n}-\beta_{d+1}|$, which means
\[
|\EE U_{k,n}-\beta_d|+ |U_{k,n}-\EE U_{k,n}|\geq \frac{\beta_d-\beta_{d+1}}{2}\geq
\frac{1}{(d-1)^2}.
\]
Since $|\EE U_{k,n}-\beta_d|$ converges to zero, the condition above requires that 
$|U_{k,n}-\EE U_{k,n}|\geq 1/\frac{1}{2(d-1)^2}$, for $n$ large enough.
The probability of this last event is bounded as follows. Let $c$ be a sup norm bound 
for $h(\widehat{X_1},\widehat{X_2})-\EE h(\widehat{X_1},\widehat{X_2})$, for
the kernel $h$ given
in (\ref{kernel}) and $\widehat{X_i}$ as in Lemma \ref{lem: basicIneqs}. Clearly,
$c$ is bounded above by $\pi^2/4$. By Bernstein's inequality for
$U$-statistics \cite[Proposition 2.3(a)]{ag93}, we get
\[
\PP\left(|U_{k,n}-\EE U_{k,n}|\geq \frac{1}{2(d-1)^2}\right)\leq
2\exp\left(\frac{-k/(8(d-1)^4)}{2\mbox{Var}(h(\widehat{X_1},\widehat{X_2}))
+\frac{2c}{6(d-1)^2}}\right)
\]
Now, using Claim 
\ref{beta_sigma_bounds} and Corollary \ref{cor: var_equal}, we have, after some calculations,
\beq\label{cota_error}\begin{small}
\PP\left(|U_{k,n}-\EE U_{k,n}|\geq \frac{1}{2(d-1)^2}\right)\leq
2\exp\left(\frac{-k/(8(d-1)^4)}{\frac{5}{(d-1)^2}+\frac{\pi^2}{12(d-1)^2}}\right)
\leq 2\exp\left(\frac{-k}{47(d-1)^2} \right)
\end{small}
\enq
This bound goes to zero as $n$ (and $k$) grow to infinity, finishing the proof.
\hfill$\Box$
\vskip 1pc
\noindent Forcing the estimation error bound in (\ref{cota_error}) to be less that a given
$\delta>0$ will give a value of $k$ of the order of $Ad^{\,2}\ln(2/\delta)$, for some constant
$A$, reflecting that precise estimation is more demanding, in terms of sample size,
as the dimension $d$ grows.

% \newpage
\section{Estimators}\label{sec:4}
\label{sec: Applications}

In this section we present two dimension estimators based on the statistic $U_{k,n}$. First,
 a local estimator that gives the dimension of $M$ around a distinguished non-singular point 
 $p \in M$ is discussed. Then, the case in which the manifold is equidimensional is considered, by 
building upon our local estimator to propose a global dimension estimator. Some implementation 
issues are discussed and in the following section we evaluate the performance of our estimators. The programming code used in these experiments is publicly available, 
it can be found at \url{https://github.com/mateodd25/ANOVA_dimension_estimator}.
% \vskip 1pc
% The following is our general procedure for local
% dimension estimation.
\begin{Ualgorithm}[h]
\SetAlgoLined
\KwData{$k \in \NN_+$ and  $X_1, \dots, X_n, p \in M \subseteq \RR^m$}
\KwResult{Estimated dimension $\widehat d$ at $p \in M$}
Find the $k$-nearest neighbors to $p$\;
Use these neighbors to compute $U_{k,n}$ as in \eqref{def:statistic}\;
Choose ${\widehat d}$ associated with $U_{k,n}$\;
\caption{Local dimension estimation}\label{alg:local_estimator}
\end{Ualgorithm} 

\subsection{Local estimators}

The theory presented in Section~\ref{sec:3} suggests that $k \sim \log(n)$ should be a good choice, asymptotically-speaking, for the number of neighbors to consider in the local dimension
estimation procedure. However, one could potentially leverage prior knowledge of the structure of the problem to set this parameter differently. In our implementation we set it to $k = \text{{\tt round}} (10 \log_{10}(n))$. 

In our theoretical analysis presented above, it was assumed that we are given a center point $p$
where the local dimension is to be estimated. A natural question that arises in practice
is the following: given a sample, how to select good center points. In Section~\ref{subsec:global_estimators} we present a simple heuristic to select ``good'' centers. 

Both estimators presented in what follows are based on the Algorithm~\ref{alg:local_estimator}. The 
difference between the estimators considered lies on the last line of the algorithm, namely, on how
to pick the dimension estimator, given $U_{k,n}$. Next, the two different rules to execute this 
step are discussed. 

% Description of the two local dimension estimators \\

\subsubsection{Basic estimator} Since $U_{k,n}$ converges in probability to
$\beta_d$, a natural way to estimate the dimension from $U_{k,n}$ is to set 
\[\widehat d_{\text{basic}} := \argmin_{d \in [D_{\rm max}]} |\beta_d - U_{k,n}|,\]
where $D_{\rm max}$ is the ambient dimension or some bound we know a priori on the dimension of the manifold.
Interestingly, such a rule is fairly accurate, as established in Proposition \ref{prop: consist}. Another advantage of this estimator is that there is no need to train it, since all the quantities involved have been analytically computed (and presented in Section\ref{sec:2}). 

\begin{remark}
Classical discriminant analysis results (see, for example, Section 4.1 in \cite{Devroye2013})
would advice to incorporate available variance information on the selection of $\widehat{d}$,  by
choosing
\[\widehat d_\mathrm{disc} := \argmax\{d \mid U_{k,n} \geq \eta_d\}\qquad \text{where} \qquad \eta_d = \beta_d + \dfrac{\sigma_{d}}{\sigma_{d} + \sigma_{d+1}}(\beta_{d-1} - \beta_d).\]
Still, simulation evaluations
(not included) show that $\widehat d_{\mathrm{disc}}$ and 
$\widehat d_\mathrm{basic}$ have a very similar performance in practice (and also in theory, since
both are consistent). Thus, we prefer to use the later, being the simpler one.
\end{remark}

\subsubsection{Kernel-based estimator}

For our second estimator we start by simulating multiple instances $Y_1^{(d)}, \dots, Y_M^{(d)}$ of 
the random variable $k(E_n-\beta_d)$, for a large value of $M$ ($=$5000, for instance) and
with $E_n$ as defined in equation~\eqref{E_n}, for each 
dimension $d \in [D_{\rm max}]=\{1, \dots, D_{\rm max}\}$.  From these data, the density $\hat f^{(d)}_k$, of
$k(E_n-\beta_d)$, is estimated, for each $d$, as 
\beq\label{kerneldens}
\hat f^{(d)}_k(y)=\frac{1}{Mh}\sum_{i=1}^M\varphi\left(\frac{y-Y_i^{(d)}}{h}\right)
\enq 
where $\varphi(\cdot)$ is the standard Gaussian density and the parameter $h$ (the ``bandwidth'')
can be set at $h=(4/3M)^{1/5}$.
%, where $s$ is the standard deviation of the sample$Y_1^{(d)}, \dots, Y_M^{(d)}$. 
This choice of bandwidth guarantees consistent density estimation (see 
Section 4.1 in \cite{bf}).
Then, a Bayesian classification procedure with uniform prior distribution on the
set $[D_{\rm max}]$, would select the dimension as that for which
the kernel density estimator is maximized at the standardized $U_{k,n}$, namely
\[\widehat d_{\rm ker} = \argmax_{d \in [D_{\rm max}]} \hat f_k^{(d)}(k(U_{k,n}-\beta_d)).\] 
Notice that the simulations described above need to be performed only once for each dimension,
since they are made on uniform data on $S^{d-1}$ and do not depend on the particular
data being studied. 

\subsection{Global estimators}\label{subsec:global_estimators}

We now turn our attention to extending the local dimension estimation algorithm to a global one. Assuming that $M$ is equidimensional, the local method can be extended by running multiple instances of Algorithm~\ref{alg:local_estimator} on different centers, and combining the results,
as outlined in Algorithm~\ref{alg:global_estimator}. 

After getting dimension estimates at each center, one could use different 
summary statistics to choose the global dimension, such as the mean, the mode or the median. To make a 
method robust against outliers, we chose to use the median. To decide about the parameter $c$ we ran a 
cross-validation algorithm. Empirically, it appears that $c \sim \log(n)$ is a good choice for this 
parameter. 

\begin{Ualgorithm}[t]
\SetAlgoLined
\KwData{$c \in \NN_+$, $k \in \NN_+$ and  $X_1, \dots, X_n \in M \subseteq \RR^m$}
\KwResult{Estimated dimension $\widehat d$ of $M$}
Choose $c$ centers $p_1, \dots, p_c$ from the sample\;
Apply Algorithm~\ref{alg:local_estimator} to each center $p_i$, let $\widehat d_i$ be its output\;
Set ${\widehat d}$ to the median of $\{\widehat d_i\}_{i=1}^c$\;
\caption{Global dimension estimation}\label{alg:global_estimator}
\end{Ualgorithm}

\subsubsection{Choosing centers} To pick the centers $p_i$ in Algorithm~\ref{alg:global_estimator},
we divide the sample into $c$ disjoint subsamples of approximately equal size. Assume, for simplicity of the exposition, that $c = 1$. Inside each subsample we pick a center by assigning each point a score of centrality and then choosing the one with highest score. Scores 
are assigned through the following procedure: 
\begin{enumerate}
	\item For each coordinate $i$, we order the sample based on the $i$-th entry, let 
	$\tau_i$ be the permutation giving this ordering, that is, 
	the $i$-th row in $(X_{\tau_i(1)}, \dots, 
	X_{\tau_i(n)})$ is nondecreasing.
	\item Then, the centrality score of $X_j$ is given by $\sum_{i=1}^m f(\tau_i(j))$, where $f(x) = \left|\frac{1}{2} - \frac{2(x-1)}{2n}\right|.$
\end{enumerate} 
For the $i$-th coordinate, the weight function $f$ gives the maximum scores to 
the point (or points) such that $\tau_i(j)$ is closest to $n/2$ and thus,
the mechanism used chooses as center a point which for many components, appears 
near the center of these orderings.

\subsubsection{Heuristic to discard centers} Finally, we present a simple heuristic for discarding some of the selected centers, based on the mean of the angles between its neighbors, taking as
always, the point considered as origin. This is done in order to improve the performance of the
statistic. Consider again the angle
\[
\theta_{i,j}=\arccos\left\langle \frac{X_i-p}{\|X_i-p\|},\frac{X_j-p}{\|X_j-p\|}\right\rangle
\]
for each pair of nearest neighbors $X_i,X_j$ of the point $p\in M$, as used in the basic
definition (\ref{def:statistic}).  
Consider the average of these angles, \[\overline{\theta}(p)=
\frac{1}{\binom{k}{2}} \sum_{1\leq i<j\leq k} \theta_{i,j}.
\]
If the manifold $M$, near $p$, is approximately flat, $\overline{\theta}(p)$ should be close
to $\pi/2$, regardless of the value of the dimension $d$, since $\pi/2$ is the expected
value of the angle between uniformly sampled points in every dimension and the $U$-statistic
$\overline{\theta}(p)$ should converge rapidly to this expectation.
Thus, when $\overline{\theta}(p)$ is far from $\pi/2$, it can be taken as a suggestion of 
strong curvature that is causing non-uniformity of the angles, and therefore, $p$ might
not be a good point to consider for dimension estimation. For these reason, in our 
implementation, the user is allowed to use this heuristic and discard a fraction
of the centers $p_i$ with largest values of $|\overline{\theta}(p_i)-\pi/2|$. Experiments presented in the next section suggest that this heuristic is 
useful when the manifold is highly curved.  

\section{Numerical results}\label{sec:5}
We compare our methods against two powerful dimension estimators, \texttt{DANCo}~\cite{ceruti-D} and Levina-Bickel~\cite{lb}, using a manifold library proposed in \cite{hein}. The first estimator is, arguably, the state-of-the-art for this problem, while the second one is a classical well-known estimator with great performance. To see a comparison between these and other estimators we refer the reader to \cite{ceruti-D, campad-rev}. 

Table~\ref{table:manifold_desc} presents a brief description of the manifolds included in the 
study. Additionally Table~
\ref{table:parameters} contains a list of the parameters used for each one of the estimators. We compare two error 
measures, namely the \emph{Mean Square Error} (MSE) and the \emph{Mean Percentage Error} (MPE) 
which are defined as 
\[\mathrm{MSE}(\widehat d):= \dfrac{1}{T}\sum_{i=1}^T (\widehat d_i - d_i)^2 \qquad \text{and} \qquad 
\mathrm{MPE}(\widehat d) := \dfrac{100}{T}\sum_{i=1}^T \frac{|\widehat d_i - d_i|}{d_i}  \]
where $T$ is the number of trials included in the test and $\widehat d_i$ and $d_i$ are the 
estimated 
dimension and the correct dimension of the $i$th trial, respectively. 

For each one of the aforementioned manifolds, we draw $T = 50$ random samples with  
$n = 2500$ data points and then compute the 
MSE and MPE of the following four estimators: the global basic estimator (Basic), the global basic 
estimator combined with the centers heuristic (B+H), the global kernel-based estimator (Kernel), the global 
kernel-based estimator with centers heuristic (K+H), the Levina-Bickel estimator (LB), and the 
\texttt{DANCo} estimator. Tables~\ref{table:MSE} and \ref{table:MPE} summarize the results.  
\begin{table}[h]
\centering
\caption{Library of manifolds used for benchmark, for more details consult \cite{hein}.}
\label{table:manifold_desc}
\begin{tabular}{|c|c|c|c|}
\hline
\textbf{Manifold} & $d$ & $m$ & \textbf{Description}                  \\ \hline
$M_1$             & 9                            & 10                         & Sphere ${S}^9$     \\ \hline
$M_2$             & 3                            & 5                          & Affine subspace                        \\ \hline
$M_3$             & 4                            & 6                          & Nonlinear manifold        \\ \hline
$M_4$             & 4                            & 8                          & Nonlinear manifold                                        \\ \hline
$M_5$             & 2                            & 3                          & Helix                                 \\ \hline
$M_6$             & 6                            & 36                         & Nonlinear manifold                                       \\ \hline
$M_7$             & 2                            & 3                          & Swiss roll                            \\ \hline
$M_8$             & 12                           & 72                         & Highly curved manifold \\ \hline
$M_9$             & 20                           & 20                         & Full-dimensional cube                          \\ \hline
$M_{10}$          & 9                            & 10                         & 9-dimensional cube                    \\ \hline
$M_{11}$          & 2                            & 3                          & Ten-times twisted Mobius band         \\ \hline
$M_{12}$          & 10                           & 10                         & Multivariate Gaussian                 \\ \hline
$M_{13}$          & 1                            & 10                         & Curve                 \\ \hline
\end{tabular}
\end{table}

\begin{table}[h]
\centering
\caption{Parameters of each algorithm.}
\label{table:parameters}
\begin{tabular}{|c|c|}
\hline
{\textbf{Method}} & \textbf{Parameters}                \\ \hline
% ANOVA                                 & $k = \mathtt{round}(10\log_{10}(n)), c = \mathtt{round}(2\log(n))$ \\ \hline
ANOVA                                 & $k = 34, c = 16$ \\ \hline
LB                                    & $k_1 = 10, k_2 = 20$               \\ \hline
\texttt{DANCo}                                 & $k = 10$                           \\ \hline
\end{tabular}
\end{table}

In both these tables, the last column shows the average of the performance measure over
the examples. It is clear from these tables that the angle-variance methods introduced in the
present article do well, in terms of average performance, against the very strong competitors
considered. This is more evident when the measure of error is the MSE. Still, for many of the
manifolds considered, namely M$_1$, M$_3$, M$_4$ (in this case tied with LB), 
M$_9$ and M$_{12}$, \texttt{DANCo} clearly displays the best performance. The Levina-Bickel estimator
is the best for manifold M$_6$, tying for first with \texttt{DANCo} in M$_4$, while the
procedures proposed in this article show the best performance in the cases of manifolds
M$_8$ and M$_{10}$, having very good performance also in cases M$_3$, M$_5$, M$_7$ 
and M$_{12}$. It is interesting that our methods do particularly well in case M$_8$, 
a high curvature manifold of
a relatively high dimensional in a large dimension ambient space. 
It does not appear to exist a significant difference in performance between
the Basic procedure and the procedure that uses Kernel Density Estimation. On the other
hand, the introduction of the heuristics discussed in Section 4 turns out to be beneficial
for our estimators  in some of the relatively 
high dimensional cases, namely M$_9$ and M$_{12}$,
while these heuristics degrade somehow the performance in the intermediate dimension cases,
M$_4$ and M$_6$. In all other cases, the use of the heuristics for center selection and
center elimination does not appear to have a strong effect.

\begin{table}[]
\centering
\caption{Rounded Mean Square Error for different manifolds, last column displays the average MSE over 
all the examples. The darker cells show the best results in each column.}
\label{table:MSE}
\makebox[\textwidth][c]{
\begin{tabular}{|c|c|c|c|c|c|c|c|c|c|c|c|c|c||c|}
\hline
& $M_1$ & $M_2$ & $M_3$ & $M_4$ & $M_5$ & $M_6$ & $M_7$ & $M_8$  & $M_9$  & $M_{10}$ & $M_{11}$ & $M_{12}$ & $M_{13}$ & Mean \\ \hline
Basic                    & 0.95 & \cellcolor[HTML]{EFEFEF}0.00 & 0.58 & 0.03 & \cellcolor[HTML]{EFEFEF}0.00 & 0.67 & \cellcolor[HTML]{EFEFEF}0.00 & 1.72  & 10.39  & \cellcolor[HTML]{EFEFEF}0.00 & \cellcolor[HTML]{EFEFEF}0.00 & 0.12 & \cellcolor[HTML]{EFEFEF}0.00 & 1.11     \\ \hline
B+H      & 1.09 & \cellcolor[HTML]{EFEFEF}0.00 & 0.66 & 0.28 & \cellcolor[HTML]{EFEFEF}0.00 & 1.30 & 0.01 & 2.14  & 4.64  & 0.03 & \cellcolor[HTML]{EFEFEF}0.00 & 0.10 & \cellcolor[HTML]{EFEFEF}0.00 & \cellcolor[HTML]{EFEFEF}0.79       \\ \hline
Kernel             & 0.95 & \cellcolor[HTML]{EFEFEF}0.00 & 0.68 & 0.08 & \cellcolor[HTML]{EFEFEF}0.00 & 0.69 & 0.29 & \cellcolor[HTML]{EFEFEF}1.27  & 10.20  & \cellcolor[HTML]{EFEFEF}0.00    & \cellcolor[HTML]{EFEFEF}0.00    & 0.15    &\cellcolor[HTML]{EFEFEF}0.00    & 1.10       \\ \hline
K+H & 0.99 & \cellcolor[HTML]{EFEFEF}0.00 & 0.76 & 0.45 & \cellcolor[HTML]{EFEFEF}0.00 & 1.31 & 0.31 & 2.48  & 4.06  & 0.01    & \cellcolor[HTML]{EFEFEF}0.00    & 0.04    &\cellcolor[HTML]{EFEFEF}0.00   & 0.80       \\ \hline
LB           & 0.49 & 0.02 & 0.05 & 0.00 & 0.00 & \cellcolor[HTML]{EFEFEF}0.10 & 0.00 & 2.27  & 29.33 & 2.23 & 0.00    & 0.54    & 0.00    & 2.69       \\ \hline
\texttt{DANCo}         &\cellcolor[HTML]{EFEFEF}0.16 & \cellcolor[HTML]{EFEFEF}0.00 & \cellcolor[HTML]{EFEFEF}0.00 & \cellcolor[HTML]{EFEFEF}0.00 & \cellcolor[HTML]{EFEFEF}0.00 & 1.00 & \cellcolor[HTML]{EFEFEF}0.00 & 25.22 & \cellcolor[HTML]{EFEFEF}0.96 & 0.10 & \cellcolor[HTML]{EFEFEF}0.00    & \cellcolor[HTML]{EFEFEF}0.00    & \cellcolor[HTML]{EFEFEF}0.00    & 2.11       \\ \hline
\end{tabular}}
\end{table}

\begin{table}[]
\centering
\caption{Rounded Mean Percentage Error for different manifolds, last column displays the average MPE over all the examples.}
\label{table:MPE}
\makebox[\textwidth][c]{
\begin{tabular}{|c|c|c|c|c|c|c|c|c|c|c|c|c|c||c|}
\hline
& $M_1$ & $M_2$ & $M_3$ & $M_4$ & $M_5$ & $M_6$ & $M_7$ & $M_8$  & $M_9$  & $M_{10}$ & $M_{11}$ & $M_{12}$ & $M_{13}$ & Mean \\ \hline
Basic                & 10.56                         & \cellcolor[HTML]{EFEFEF}0.00 & 15.50                         & 1.00                         & \cellcolor[HTML]{EFEFEF}0.00 & 10.83                         & \cellcolor[HTML]{EFEFEF}0.00 & 9.00 & 15.50 & \cellcolor[HTML]{EFEFEF}0.00                         & \cellcolor[HTML]{EFEFEF}0.00 & 1.50                         & \cellcolor[HTML]{EFEFEF}0.00 & \cellcolor[HTML]{EFEFEF}4.93 \\ \hline
B+H              & 11.33                         & \cellcolor[HTML]{EFEFEF}0.00 & 17.75                         & 8.25                         & \cellcolor[HTML]{EFEFEF}0.00 & 17.83                         & 0.50                         & 9.83 & 10.00 & 0.67                         & \cellcolor[HTML]{EFEFEF}0.00 & 1.20                         & \cellcolor[HTML]{EFEFEF}0.00 & 5.95                         \\ \hline
Kernel                & 10.56                         & \cellcolor[HTML]{EFEFEF}0.00 & 17.75                         & 2.50                         & \cellcolor[HTML]{EFEFEF}0.00 & 11.50                         & 16.50 & \cellcolor[HTML]{EFEFEF}7.75 & 15.55 & \cellcolor[HTML]{EFEFEF}0.00 & \cellcolor[HTML]{EFEFEF}0.00 & 1.80                         & \cellcolor[HTML]{EFEFEF}0.00                         & 6.45                         \\ \hline
K+H              & 11.00                         & \cellcolor[HTML]{EFEFEF}0.00 & 20.00                         & 12.25                         & \cellcolor[HTML]{EFEFEF}0.00 & 18.00                         & 17.50 & 10.75 & 9.60 & 0.11                         & \cellcolor[HTML]{EFEFEF}0.00 & 0.60                         & \cellcolor[HTML]{EFEFEF}0.00                         & 7.67                         \\ \hline
LB               & 7.77                         & 4.84                         & 5.77 & 1.49                        & 1.84                         & \cellcolor[HTML]{EFEFEF}5.13                         & 2.63 & 12.55                         & 27.07 & 16.57                         & 1.64                         & 7.34                         & 0.50                         & 7.31                         \\ \hline
\texttt{DANCo} & \cellcolor[HTML]{EFEFEF}1.77 & \cellcolor[HTML]{EFEFEF}0.00 & \cellcolor[HTML]{EFEFEF}0.00 & \cellcolor[HTML]{EFEFEF}0.00 & \cellcolor[HTML]{EFEFEF}0.00 & 16.66                         & \cellcolor[HTML]{EFEFEF}0.00 & 41.83 & \cellcolor[HTML]{EFEFEF} 4.80 & 1.11                         & \cellcolor[HTML]{EFEFEF}0.00 & \cellcolor[HTML]{EFEFEF}0.00 & \cellcolor[HTML]{EFEFEF}0.00 & 5.09                         \\ \hline
\end{tabular}}
\end{table}

\end{document}